\newcommand{\showcomments}{yes}
\newsavebox{\commentbox}
\newenvironment{com}%
\newcounter{ax}
\newtheorem{thm}{Theorem}[section]
\newtheorem{lem}[thm]{Lemma}
\newtheorem{cor}[thm]{Corollary}
\newtheorem{prop}[thm]{Proposition}
\newtheorem*{thmi}{Theorem}
\theoremstyle{definition}
\newtheorem{defn}[thm]{Definition}
\newtheorem{rem}[thm]{Remark}
\newtheorem*{remi}{Remark}
\newtheorem{conv}[thm]{Convention}
\newtheorem{claim*}{Claim}
 \DeclareMathOperator{\link}{Lk}
\newcommand{\field}[1]{\mathbb{#1}}
\newcommand{\naturals}{\ensuremath{\field{N}}}
\newcommand{\Rmnum}[1]{\mathbf{{\expandafter\@slowromancap\romannumeral #1@}}}
\let\oldmarginpar\marginpar
\renewcommand\marginpar[1]{\-\oldmarginpar[\raggedleft\footnotesize #1]%
{\raggedright\footnotesize #1}}
\newcommand\ds{\displaystyle}
\newcounter{enumitemp}
\newcommand{\core}{\mathrm{core}}
\newcommand{\blockend}{\mathrm{end}}
\def\G{{\mathcal G}}
\newcommand{\CFS}{\mathcal{CFS}}
\newcommand{\E}{\mathbb E}
\newcommand{\Pb}{\mathbb P}
\renewcommand{\Pr}{\mathbb P}
\newcommand{\Vertices}{\mathbf{Vert}}
\newcommand{\Edges}{\mathbf{Edge}}
\newcommand{\AS}{\mathcal{AS}}
\long\def\Restate#1#2#3#4{
\medskip\par\noindent
{\bf #1 \ref{#2} #3} {\it #4}\par\medskip }
\long\def\Restates#1#2#3#4#5{
\medskip\par\noindent
{\bf #1 \ref{#2} and \ref{#5} #3} {\it #4}\par\medskip }
\begin{document}

\title{Global structural properties of random graphs}

\author[J. Behrstock]{Jason Behrstock}
\address{Lehman College and The Graduate Center, CUNY, New York, New York, USA}
\curraddr{Barnard College, Columbia University, New York, New York,
USA} \email{jason@math.columbia.edu}
\thanks{\flushleft {Behrstock was supported by a Simons Fellowship and NSF grant DMS-0739392.}}

\author[V. Falgas-Ravry]{Victor Falgas-Ravry}
\address{Ume{\aa} Universitet, Ume{\aa}, Sweden}
\email{victor.falgas-ravry@umu.se}

\author[M.F. Hagen]{Mark F. Hagen}
\address{U. Cambridge, Cambridge, UK}
\email{markfhagen@gmail.com}
\thanks{\flushleft {Hagen was supported by the National Science Foundation under Grant Number NSF 1045119.}}

\author[T. Susse]{Tim Susse}
\address{U. Nebraska, Lincoln, Nebraska, USA}
\email{tsusse2@unl.edu}

\subjclass[2010]{{Primary  05C80, 20F65, 57M15, 60B99; Secondary 05C75,
20F55, 20F69}}

\date{\today}

\maketitle

\begin{abstract}
We study two global structural properties of a graph $\Gamma$, denoted
$\AS$ and $\CFS$, which arise in
a natural way from geometric group theory.  We study these properties
in the Erd{\H o}s--R\'enyi random graph model $\G(n,p)$, proving the existence of a sharp threshold for a random graph to have the $\AS$ property asymptotically almost surely, and giving fairly tight bounds for the corresponding threshold for the $\CFS$ property.

As an application of our results, we show that for any constant $p$ and any
$\Gamma\in\G(n,p)$, the
right-angled Coxeter group $W_\Gamma$
asymptotically almost surely has quadratic divergence and thickness of order $1$,
generalizing and strengthening a result of
Behrstock--Hagen--Sisto~\cite{BehrstockHagenSisto:coxeter}.  Indeed,
we show that at a large range of densities a
random right-angled Coxeter group has quadratic divergence.
\end{abstract}

\section*{Introduction}\label{sec:results}
In this article, we consider two properties of graphs motivated by geometric group theory.  We show that these properties are typically present in random graphs.  We repay the debt to geometric group theory by applying our (purely graph-theoretic) results to the large-scale geometry of Coxeter groups.

\subsection*{Random graphs} Let $\G(n,p)$ be the random graph model on
$n$ vertices obtained by including each edge independently at random
with probability $p=p(n)$.
The parameter $p$ is often referred to as the
\emph{density} of $\G(n,p)$.  The model $\G(n,p)$ was introduced by
Gilbert~\cite{Gilbert:Random_graphs},
and the resulting random graphs are usually referred to as the
``Erd\H{o}s--R\'{e}nyi
random graphs'' in honor of Erd{\H os} and R\'enyi's seminal
contributions to the field, and we follow this convention.  We say
that a property $\mathcal{P}$ holds \emph{asymptotically almost
surely} (a.a.s.) in $\G(n,p)$ if for $\Gamma\in\G(n,p)$ we have
$\mathbb{P}(\Gamma \in
\mathcal{P})\rightarrow 1$ as $n\rightarrow \infty$.  In this paper we
will be interested in proving that certain global properties hold
a.a.s.\
in $\G(n,p)$ both for a wide range of probabilities $p=p(n)$.

A graph property is \emph{(monotone) increasing} if it is closed
under the addition of edges. A paradigm in the theory of random
graphs is that global increasing graph properties exhibit \emph{sharp thresholds} in $\G(n,p)$: for many global increasing properties $\mathcal{P}$, there is a \emph{critical density} $p_c=p_c(n)$ such that for any fixed $\epsilon>0$ if $p<(1-\epsilon)p_c$ then a.a.s.\ $\mathcal{P}$ does not hold in $\G(n,p)$, while if $p>(1+\epsilon)p_c$ then a.a.s.\ $\mathcal{P}$ holds in $\G(n,p)$. A quintessential example is the following classical result of Erd{\H o}s and R\'enyi which provides a sharp threshold for connectedness:
\begin{thmi}[Erd\H{o}s-R\'{e}nyi; \cite{ErdosRenyi3}]\label{ER:connected}
 There is a sharp threshold for connectivity of a random graph with critical density $p_c(n)=\frac{\log(n)}{n}$. %Above this density random graphs are connected and below it they are disconnected.
\end{thmi}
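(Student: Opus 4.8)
The plan is to run the classical two-sided argument, whose essential content is that, as the edge density grows past $\tfrac{\log n}{n}$, the last obstruction to connectivity is the presence of an isolated vertex. Since connectivity is a monotone increasing property, the coupling $\G(n,p)\subseteq\G(n,p')$ for $p\le p'$ reduces the theorem to two statements: that at $p=(1-\epsilon)\tfrac{\log n}{n}$ a random graph is a.a.s.\ disconnected, and that at $p=(1+\epsilon)\tfrac{\log n}{n}$ it is a.a.s.\ connected. In both regimes I would begin by analysing $X=X(\Gamma)$, the number of isolated vertices of $\Gamma$: a fixed vertex is isolated with probability $(1-p)^{n-1}$, so $\mathbb{E}[X]=n(1-p)^{n-1}$, and writing $p=c\tfrac{\log n}{n}$ and using $\log(1-p)=-p+O(p^2)$ with $p=\Theta(\tfrac{\log n}{n})$ one gets $\mathbb{E}[X]=n^{1-c+o(1)}$. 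Hence $\mathbb{E}[X]\to\infty$ when $c=1-\epsilon$ and $\mathbb{E}[X]\to 0$ when $c=1+\epsilon$; this one computation is precisely where the critical density $\tfrac{\log n}{n}$ enters.

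For the disconnected side I would apply the second moment method to $X$. A short computation gives $\mathbb{E}\bigl[X(X-1)\bigr]=n(n-1)(1-p)^{2n-3}$ (for distinct $u,v$ there are $2n-3$ edges incident to $\{u,v\}$), so $\mathbb{E}[X^2]=\mathbb{E}[X]^2\bigl(\tfrac{n-1}{n(1-p)}+\tfrac{1}{\mathbb{E}[X]}\bigr)=(1+o(1))\mathbb{E}[X]^2$, i.e.\ $\mathrm{Var}(X)=o(\mathbb{E}[X]^2)$; Chebyshev's inequality then gives $\mathbb{P}(X=0)\to 0$, and a graph with an isolated vertex is disconnected.

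For the connected side, Markov's inequality immediately shows $\Gamma$ a.a.s.\ has no isolated vertex, and the actual work is upgrading ``no isolated vertex'' to ``connected''. I would do this by a union bound over the smaller side of a hypothetical disconnection: if $\Gamma$ is disconnected it has a connected component on a vertex set $S$ with $1\le|S|\le n/2$, and by Cayley's formula for the number of labelled trees the expected number of such sets of size $k$ is at most $a_k:=\binom{n}{k}k^{k-2}p^{k-1}(1-p)^{k(n-k)}$, so it suffices to prove $\sum_{k=1}^{\lfloor n/2\rfloor}a_k\to 0$. The $k=1$ term is exactly $\mathbb{E}[X]=o(1)$; for $2\le k\le n/2$ the estimates $\binom{n}{k}\le(en/k)^k$, $k^{k-2}\le k^k$, $p^{k-1}\le p^{-1}p^k$, and $(1-p)^{k(n-k)}\le e^{-pk(n-k)}\le e^{-pkn/2}=n^{-(1+\epsilon)k/2}$ bound $a_k$ by $n\bigl(C\log n\cdot n^{-(1+\epsilon)/2}\bigr)^k$ for a constant $C$, a geometric series whose sum is $O\bigl((\log n)^2 n^{-\epsilon}\bigr)=o(1)$.

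The step I expect to require the most care --- less cleverness than bookkeeping --- is making this union bound genuinely uniform over the whole range $2\le k\le n/2$: one separates the ``small $k$'' regime, where $a_k$ decays geometrically with ratio tending to $0$, from the ``large $k$'' regime near $n/2$, where $\binom{n}{k}$ can be as large as $2^n$ but is swamped by the factor $(1-p)^{k(n-k)}=e^{-\Theta(n\log n)}$. Everything else --- the variance estimate, the input from Cayley's formula, and the monotonicity reduction --- is routine.
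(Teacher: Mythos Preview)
Your argument is the standard textbook proof and is correct; the second-moment computation and the spanning-tree union bound are both carried out cleanly, and your geometric-series estimate already handles the full range $2\le k\le\lfloor n/2\rfloor$ uniformly (the ratio $C(\log n)\,n^{-(1+\epsilon)/2}$ tends to $0$), so the extra care you flag for $k$ near $n/2$ is not actually needed.

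That said, there is nothing to compare against: the paper does not prove this statement. It is quoted in the introduction as a classical result of Erd\H{o}s and R\'enyi, with a citation to~\cite{ErdosRenyi3}, purely to illustrate the notion of a sharp threshold before the paper's own results on $\AS$ and $\CFS$ are stated. No proof, sketch, or argument for it appears anywhere in the paper.
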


The local
structure of the Erd{\H o}s--R\'enyi random graph is well understood,
largely due to the assumption of independence between the edges.
For example, Erd\H{o}s--R\'{e}nyi and others have obtained threshold
densities for the existence of certain subgraphs in a random graph
(see e.g.\ \cite[Theorem 1, Corollaries 1--5]{ErdosRenyi3}).  In
earlier applications of random graphs to geometric group theory, this
feature of the model was successfully exploited in order to analyze
the geometry of right-angled Artin and Coxeter groups presented by
random graphs; this is notable, for example, in the work of Charney
and Farber \cite{CharneyFarber}.  In particular, the presence of an induced square implies non-hyperbolicity of the associated
right-angled Coxeter group~\cite{CharneyFarber, ErdosRenyi3, Moussong:thesis}.

In this paper, we take a more global approach.  Earlier work
established a correspondence between some
fundamental geometric properties of right-angled Coxeter groups
and large-scale structural properties of the presentation
graph, rather than local properties such as the presence or absence
of certain specified subgraphs. The simplest of these properties is
the property of being the join of two subgraphs that are not cliques.

One large scale graph property relevant in the present context is
a property studied in~\cite{BehrstockHagenSisto:coxeter} which,
roughly, says that the graph is constructed in a particular
organized, inductive way
from joins.  In this paper, we discuss a refined version of this
property, $\CFS$, which is a slightly-modified version of a property
introduced by Dani--Thomas~\cite{DaniThomas:divcox}.  We also study
a stronger property, $\AS$, and show it is
generic in random graphs for a large range of $p(n)$, up $1-\omega(n^{-2})$.
%Apart from the applications to group theory,
%it is independently interesting that it is possible to make precise
%statements about the prevalence of global properties like $\AS$ and
%$\CFS$ in the Erd\H{o}s-R\'{e}nyi model.

\subsection*{$\AS$ graphs} The first class of graphs we study is the
class of \emph{augmented suspensions}, which we denote $\AS$.  A graph is an augmented suspension if it contains an induced
subgraph which is a suspension (see Section~\ref{sec:definitions} for a precise definition of this term), and any vertex which is not in that
suspension is connected by edges to at least two nonadjacent vertices
of the suspension.  %This would appear to be a very restricted class
%of graphs, but we show that it is actually generic at a wide range of
%densities.  (See Section~\ref{sec:definitions} for the precise
%definitions.)

\Restates{Theorems}{thm:as_upper2}{(Sharp Threshold for $\AS$).}
{Let $\epsilon>0$ be fixed. If $p=p(n)$ satisfies $p\ge(1+\epsilon)\left(\ds\frac{\log n}{n}\right)^{\frac{1}{3}}$ and $(1-p)n^2\to\infty$,
then $\Gamma\in\G(n,p(n))$ is a.a.s.\ in $\AS$.
On the other hand, if
$p\le(1-\epsilon)\left(\ds\frac{\log{n}}{n}\right)^{\frac{1}{3}}$, then $\Gamma\in\G(n,p)$ a.a.s.\ does not lie in $\AS$.}
{thm:not_augsusp}

Intriguingly, Kahle proved  that a function similar to the critical density in
Theorem~\ref{thm:as_upper2} is the threshold for a random simplicial
complex to have vanishing second rational cohomology
\cite{Kahle:ThresholdsCohom}.

\begin{remi}[Behaviour near $p=1$]
Note that property $\AS$ is not monotone increasing, since it
requires the presence of a number of non-edges.  In particular,
complete graphs are not in $\AS$.  Thus unlike the global properties
typically studied in the theory of random graphs, $\AS$ will cease to
hold a.a.s.\ when the density $p$ is very close to $1$. In fact,
\cite[Theorem 3.9]{BehrstockHagenSisto:coxeter} shows that if
$p(n)=1-\Omega\left(\frac{1}{n^2}\right)$, then a.a.s.\ $\Gamma$ is either a clique or a clique minus a fixed number of edges whose endpoints are all disjoint. Thus, with positive probability, $\Gamma\in\AS$. However, \cite[Theorem
1]{CharneyFarber} shows that if $(1-p)n^2\to 0$ then $\Gamma$ is
asymptotically almost surely a clique, and hence not in $\AS$.

\end{remi}

%\begin{remi}[Diameter of random graphs] The diameter of random graphs has been an object of much study (see, for example, \cite{Bollobas:book, Bollobas:diamrandomgraphs,
%    ChungLu:diamsparsegraphs, KleeLarman:Diameter}). Noting that any graph in $\AS$ has diameter at most 4, we obtain a relatively easy proof that,
%in the range given by Theorem~\ref{thm:as_upper2}, including for
%constant $p$, the random graph has uniformly bounded diameter.
%\end{remi}
%
%VFR: actually for $p\geq (1+\eps) \sqrt{2\log n/n}$ is it easy to show that we will have diameter 2 (just by Chernoff), so we shouldn't make the remark above.

\subsection*{$\CFS$ graphs} The second family of graphs, which we call
\emph{$\CFS$ graphs} (``Constructed From Squares''), arise naturally in geometric group theory in
the context of the large--scale geometry of right--angled Coxeter
groups, as we explain below and in
Section~\ref{sec:application_to_racg}.  A special case of these
graphs was introduced
by Dani--Thomas to study divergence in triangle-free right-angled
Coxeter groups \cite{DaniThomas:divcox}. The graphs we study are intimately related
to a property called \emph{thickness}, a feature of many key
examples in geometric group theory and low dimensional topology that is
closely related to divergence, relative hyperbolicity, and a number of
other topics.  This property is, in essence, a connectivity property
because it relies on a space being ``connected'' through sequences
of ``large'' subspaces.
Roughly speaking, a graph is $\CFS$ if it can be built inductively by chaining
(induced) squares together in such a way that each square overlaps with one of the
previous squares along a diagonal (see Section~\ref{sec:definitions} for a precise definition).
We explain in the next section how this
class of graphs generalizes $\AS$.
Our next result about genericity of $\CFS$ combines with
Proposition~\ref{prop:thick_of_order_1} below to significantly
strengthen~\cite[Theorem~VI]{BehrstockHagenSisto:coxeter}.
%The next result establishes the
%critical order of magnitude for the property of $\CFS$.
This result is an immediate consequence of
Theorems~\ref{thm:CFS} and~\ref{Thm: CFS lower bound}, which, in fact, establish
slightly more precise, but less concise, bounds.
\Restates{Theorems}{thm:CFS}{}{Suppose $(1-p)n^2\to\infty$ and let
    $\epsilon>0$. Then $\Gamma\in\G(n,p)$ is a.a.s.\ in $\CFS$
    whenever $p(n)>n^{-\frac{1}{2}+\epsilon}$. Conversely,
    $\Gamma\in\G(n,p)$ is a.a.s.\ not in $\CFS$
whenever $p(n)<n^{-\frac{1}{2} -\epsilon}.$}{Thm: CFS lower bound}

\noindent We actually show, in Theorem~\ref{thm:CFS}, that at densities above $5\sqrt{\frac{\log
n}{n}}$, with $(1-p)n^2\to\infty$, the random graph is a.a.s. in $\CFS$, while in Theorem~\ref{Thm: CFS lower bound} we show a random graph a.a.s. not in $\CFS$ at densities below $\frac{1}{\sqrt{n}\log{n}}$.

Theorem~\ref{thm:CFS} applies to graphs in a range strictly larger
than that in which Theorem~\ref{thm:as_upper2} holds (though our proof
of Theorem~\ref{thm:CFS} relies on Theorem~\ref{thm:as_upper2} to deal
with the large $p$ case).  Theorem~\ref{thm:CFS} combines with
Theorem~\ref{thm:not_augsusp} to show that, for densities between
$\left(\log n/n\right)^{\frac{1}{2}}$ and $\left(\log
n/n\right)^{\frac{1}{3}}$, a random graph is asymptotically almost
surely in $\CFS$ but not in $\AS$.
We also note that Babson--Hoffman--Kahle \cite{BabsonHoffmanKahle}
proved that a function of
order $n^{-\frac{1}{2}}$
appears as the threshold for simple-connectivity in the
Linial--Meshulam model for random 2--complexes \cite{LinialMeshulam}.
It would be interesting to understand whether there is a connection
between genericity of the $\CFS$ property and the topology of random
2-complexes.

Unlike our results for the $\AS$ property, we do not establish a sharp
threshold for the $\CFS$ property.  In fact, we believe that neither
the upper nor lower bounds, given in Theorem~\ref{thm:CFS} and
Theorem~\ref{Thm: CFS lower bound}, for the critical density around which $\CFS$ goes from a.a.s.\ not holding to a.a.s.\ holding are sharp.  Indeed, we believe
that there is a sharp threshold for the $\CFS$ property located at
$p_c(n)=\theta(n^{-\frac{1}{2}})$.  This conjecture is linked to the
emergence of a giant component in the ``square graph'' of $\Gamma$
(see the next section for a definition of the square graph and the
heuristic discussion after the proof of Theorem~\ref{Thm: CFS lower
bound}).

\subsection*{Applications to geometric group theory}\label{subsec:intro_coxeter}
Our interest in the structure of random graphs was sparked largely by
questions about the large-scale geometry of \emph{right-angled
Coxeter groups}. Coxeter groups were first introduced 
in~\cite{Coxeter} as a generalization of reflection groups, i.e., discrete groups generated by a specified set of reflections in Euclidean space.  A reflection group is \emph{right-angled} if the reflection loci intersect at right angles.  An abstract right-angled Coxeter group generalizes this situation: it is defined by a group presentation in which the generators are involutions and the relations are obtained by declaring some pairs of generators to commute.  Right-angled Coxeter groups (and more general Coxeter groups) play an important role in geometric group theory and are closely-related to some of that field's most fundamental objects, e.g. CAT(0) cube complexes~\cite{Davis:book,NibloReeves,HaglundWise:coxeter} and (right-angled) Bruhat-Tits buildings (see e.g.~\cite{Davis:book}).  

A right-angled Coxeter
group is determined by a
unique finite simplicial \emph{presentation graph}: the vertices correspond to the involutions generating the group, and the edges encode the pairs of generators that commute.  In fact, the presentation graph uniquely determines the right-angled Coxeter group~\cite{Muhlherr:aut_cox}.  In this paper, as an application of our results on random graphs, we continue the project of understanding large-scale geometric features of right-angled Coxeter groups in terms of the combinatorics of the presentation graph, begun in~\cite{BehrstockHagenSisto:coxeter,CharneyFarber,DaniThomas:divcox}.  Specifically, we study right-angled Coxeter groups defined by random presentation graphs,  focusing on the prevalence of two important geometric properties: \emph{relative hyperbolicity} and \emph{thickness}.

Relative hyperbolicity, in the
sense introduced by Gromov and equivalently formulated by many
others~\cite{Gromov:hyperbolic,Farb:relhyp,Bowditch:RelHyp,
Osin:RelHyp}, when it holds, is a powerful tool for studying groups. On the other hand, thickness of a finitely-generated group (more
generally, a metric space) is a property introduced by
Behrstock--Dru\c{t}u--Mosher in~\cite{BDM} as a geometric obstruction
to relative hyperbolicity and has a number of powerful geometric
applications. For example, thickness gives bounds on divergence (an important quasi-isometry invariant of
a metric space) in many different groups and spaces~\cite{BehrstockDrutu:thick2, BehrstockHagen:cubulated1,
BrockMasur:WPrelhyp, DaniThomas:divcox, Sultan:thesisarticle}.

Thickness is an inductive property: in the present context, a finitely generated group
$G$ is \emph{thick of order $0$} if and only if it decomposes as the direct
product of two infinite subgroups.  The group $G$ is
thick of order $n$ if there exists a finite collection $\mathcal H$ of
undistorted subgroups of $G$, each thick of order $n-1$, whose union
generates a finite-index subgroup of $G$ and which has the following
``chaining'' property: for each $g,g'\in G$, one can construct a
sequence $g\in g_1H_1,g_2H_2,\ldots,g_kH_k\ni g'$ of cosets, with each
$H_i\in\mathcal H$, so that consecutive cosets have infinite coarse
intersection.  Many of the best-known groups studied by geometric
group theorists are thick, and indeed thick of order $1$: one-ended
right-angled Artin groups, mapping class groups of surfaces, outer
automorphism groups of free groups, fundamental groups of
$3$--dimensional graph manifolds, etc.~\cite{BDM}.

The class of Coxeter groups contains many examples
of hyperbolic and relatively hyperbolic groups.
There is a criterion for hyperbolicity purely in terms of the
presentation graph due to
Moussong \cite{Moussong:thesis} and an algebraic criterion for relative hyperbolicity due to Caprace
\cite{Caprace:relatively_hyperbolic}.
The class of Coxeter groups
includes examples which are non-relatively hyperbolic, for instance,
those constructed by Davis--Januszkiewicz \cite{DavisJanuszkiewicz} and, also, ones
studied by Dani--Thomas \cite{DaniThomas:divcox}.
In fact, in~\cite{BehrstockHagenSisto:coxeter}, this is taken further: it is
shown that every Coxeter group is actually either thick, or hyperbolic
relative to a canonical collection of thick Coxeter subgroups. Further, there is a
simple, structural condition on the presentation
graph, checkable in polynomial time, which characterizes thickness.  This result is
needed to deduce the applications below from our graph theoretic
results.

Charney and Farber initiated the study of \emph{random
graph products} (including right-angled Artin and Coxeter groups)
using the Erd\H{o}s-R\'{e}nyi model of random
graphs~\cite{CharneyFarber}.
The structure of the group cohomology of random graph products
was obtained in \cite{DavisKahle}.
In~\cite{BehrstockHagenSisto:coxeter},
various results are proved about which random graphs have the
thickness property discussed above, leading to the conclusion that, at
certain low densities, random right-angled Coxeter groups
are relatively hyperbolic (and thus not thick),
while at higher densities, random
right-angled Coxeter groups are thick.  In this paper, we improve
significantly on one of the latter results, and also prove something
considerably more refined: we isolate not just thickness of random
right-angled Coxeter groups, but thickness of a specified order,
namely $1$:

\Restate{Corollary}{cor:high_density}{(Random Coxeter groups are thick of order 1.)}
{There exists a constant $C>0$ such that if $p\colon\naturals\rightarrow(0,1)$ satisfies
$\left(\ds\frac{C\log{n}}{n}\right)^{\frac{1}{2}}\leq p(n)\leq 1-\ds\frac{(1+\epsilon)\log{n}}{n}$ for some $\epsilon>0$, 
%and all sufficiently large $n$, 
then the random right-angled Coxeter group $W_{G_{n,p}}$ is asymptotically almost
surely thick of order exactly $1$, and in particular has quadratic divergence.}

Corollary~\ref{cor:high_density}
significantly improves on Theorem~3.10 of~\cite{BehrstockHagenSisto:coxeter}, as
discussed in Section~\ref{sec:application_to_racg}. This theorem
follows from Theorems~\ref{thm:CFS} and \ref{thm:as_upper2}, the
latter being needed to treat the case of large $p(n)$, including the
interesting special case in which $p$ is constant.

\begin{rem}
We note that characterizations of thickness of right-angled Coxeter
groups in terms of the structure of the presentation graph appear to
generalize readily to graph products of arbitrary finite groups and,
probably, via the action on a cube complex constructed by Ruane and
Witzel in~\cite{RuaneWitzel:graph_products}, to arbitrary graph
products of finitely generated abelian groups, using appropriate modifications of the results in~\cite{BehrstockHagenSisto:coxeter}.	
\end{rem}

\subsection*{Organization of the paper} In
Section~\ref{sec:definitions}, we give the formal definitions of $\AS$
and $\CFS$ and introduce various other graph-theoretic notions we will
need.  In Section~\ref{sec:application_to_racg}, we discuss the
applications of our random graph results to geometric group theory, in
particular to right-angled Coxeter groups and more general graph
products.  In Section~\ref{sec:improved_as}, we obtain a sharp
threshold result for $\AS$ graphs.  Section~\ref{sec:improving} is
devoted to $\CFS$ graphs.  Finally Section~\ref{sec:experiments}
contains some simulations of random graphs with density near the
threshold for $\AS$ and $\CFS$.

\subsection*{Acknowledgments}
J.B.\ thanks the Barnard/Columbia Mathematics Department for their
hospitality during the writing of this paper. J.B.\ thanks Noah Zaitlen for introducing him to the joy
of cluster computing and for his generous time spent answering
remedial questions about programming. Also, thanks to Elchanan Mossel
for several interesting conversations about random graphs.  M.F.H.\ and T.S.\ thank the organizers of the 2015 Geometric Groups on the Gulf Coast conference, at which some of this work was completed.

This work benefited from several pieces of software, the
results of one of which is discussed in Section~\ref{sec:experiments}.  Some of the software
written by the authors incorporates components previously written by J.B.\
and M.F.H.\ jointly with Alessandro Sisto.  Another related useful
program was written by Robbie Lyman under the supervision of J.B.\ and
T.S.\ during an REU program supported by NSF Grant
DMS-0739392, see \cite{Lyman:honorsthesis}.
This research was supported, in part, by a grant of computer time
from the City University of New York High Performance Computing
Center under NSF Grants CNS-0855217, CNS-0958379, and ACI-1126113.

We thank the anonymous referee for their helpful comments.

\section{Definitions}\label{sec:definitions}
\begin{conv}
A \emph{graph} is a pair of finite sets $\Gamma=(V,E)$, where $V=V(\Gamma)$ is a set of \emph{vertices}, and $E=E(\Gamma)$ is a collection of pairs of distinct elements of $V$, which constitute the set of \emph{edges} of $G$. %All graphs considered in this paper are \emph{simplicial graphs}, i.e., graphs without loops or multiple edges.
A \emph{subgraph} of $\Gamma$ is a graph $\Gamma'$ with $V(\Gamma')\subseteq V(\Gamma)$ and $E(\Gamma')\subseteq E(\Gamma)$; $\Gamma'$ is said to be an \emph{induced} subgraph of $\Gamma$ if $E(\Gamma')$ consists exactly of those edges from $E(\Gamma)$ whose vertices lie in $V(\Gamma')$. In this paper we focus on induced subgraphs, and we generally write ``subgraph'' to mean ``induced subgraph''. In particular we often identify a subgraph with the set of vertices inducing it, and we write $\vert \Gamma\vert$ for the \emph{order} of $\Gamma$, that is, the number of vertices it contains. A \emph{clique} of size $t$ is a complete graph
on $t\geq0$ vertices. Note this includes the degenerate case of the
empty graph on $t= 0$ vertices.
\end{conv}

\begin{defn}[Link, join]\label{defn:link_join}
Given a graph $\Gamma$, the \emph{link} of a vertex $v\in\Gamma$,
denoted $\link_\Gamma(v)$, is the subgraph spanned by the set of
vertices adjacent to $v$.  Given graphs $A,B$, the \emph{join}
$A\star B$ is the graph formed from $A\sqcup B$ by joining each
vertex of $A$ to each vertex of $B$ by an edge. A \emph{suspension} is a
join where one of the factors $A,B$ is the graph consisting of
two vertices and no edges.
\end{defn}

We now describe a family of graphs, denoted $\CFS$, which satisfy the
global structural property that they are
``constructed from squares.''

\begin{defn}[$\CFS$]\label{defn:CFS}
Given a graph $\Gamma$, let $\square(\Gamma)$ be the auxiliary graph
whose vertices are the induced $4$--cycles from $\Gamma$, with two
distinct $4$--cycles joined by an edge in $\square(\Gamma)$ if and
only if they intersect in a pair of non-adjacent vertices of $\Gamma$ 
(i.e., in a diagonal).
We refer to $\square(\Gamma)$ as the \emph{square-graph} of $\Gamma$.
A graph $\Gamma$ belongs to $\CFS$ if $\Gamma=\Gamma'\star
K$, where $K$ is a (possibly
empty) clique and $\Gamma'$ is a non-empty subgraph
such that $\square(\Gamma')$ has a connected component
$C$ such that the union of the $4$--cycles from $C$ covers all of
$V(\Gamma')$. Given a vertex $F\in\square(\Gamma)$, we refer to the
vertices in the $4$--cycle in $\Gamma$ associated to $F$ as the
\emph{support} of $F$.
\end{defn}

\begin{rem}\label{rem:cfs_dt}
    Dani--Thomas introduced \emph{component with full support}
    graphs
    in \cite{DaniThomas:divcox}, a subclass of the class of triangle-free graphs.  We note that each component with full support graph is constructed from squares, but the converse is not true.  Indeed, since we do not require our graphs to be triangle-free, our definition necessarily only counts \emph{induced} 4--cycles and allows them to intersect in more ways than in \cite{DaniThomas:divcox}. This
    distinction is relevant to the application to Coxeter groups,
    which we discuss in Section~\ref{sec:application_to_racg}.
\end{rem}

\begin{defn}[Augmented suspension]\label{defn:aj}
The graph $\Gamma$ is an \emph{augmented suspension} if it contains an
induced subgraph $B=\{w,w'\}\star \Gamma'$, where $w,w'$ are
nonadjacent and $\Gamma'$ is not a clique, satisfying the additional property
that if $v\in\Gamma-B$, then $\link_\Gamma(v)\cap \Gamma'$ is not a
clique.  Let $\AS$ denote the class of augmented suspensions.
Figure~\ref{fig:AS} shows a graph in $\AS$.
\end{defn}
\begin{rem}
Neither the $\CFS$ nor the $\AS$ properties introduced above are monotone with respect to the addition of edges. This stands in contrast to the most commonly studied global properties of random graphs.
\end{rem}

\begin{figure}[h]
\includegraphics[width=0.55\textwidth]{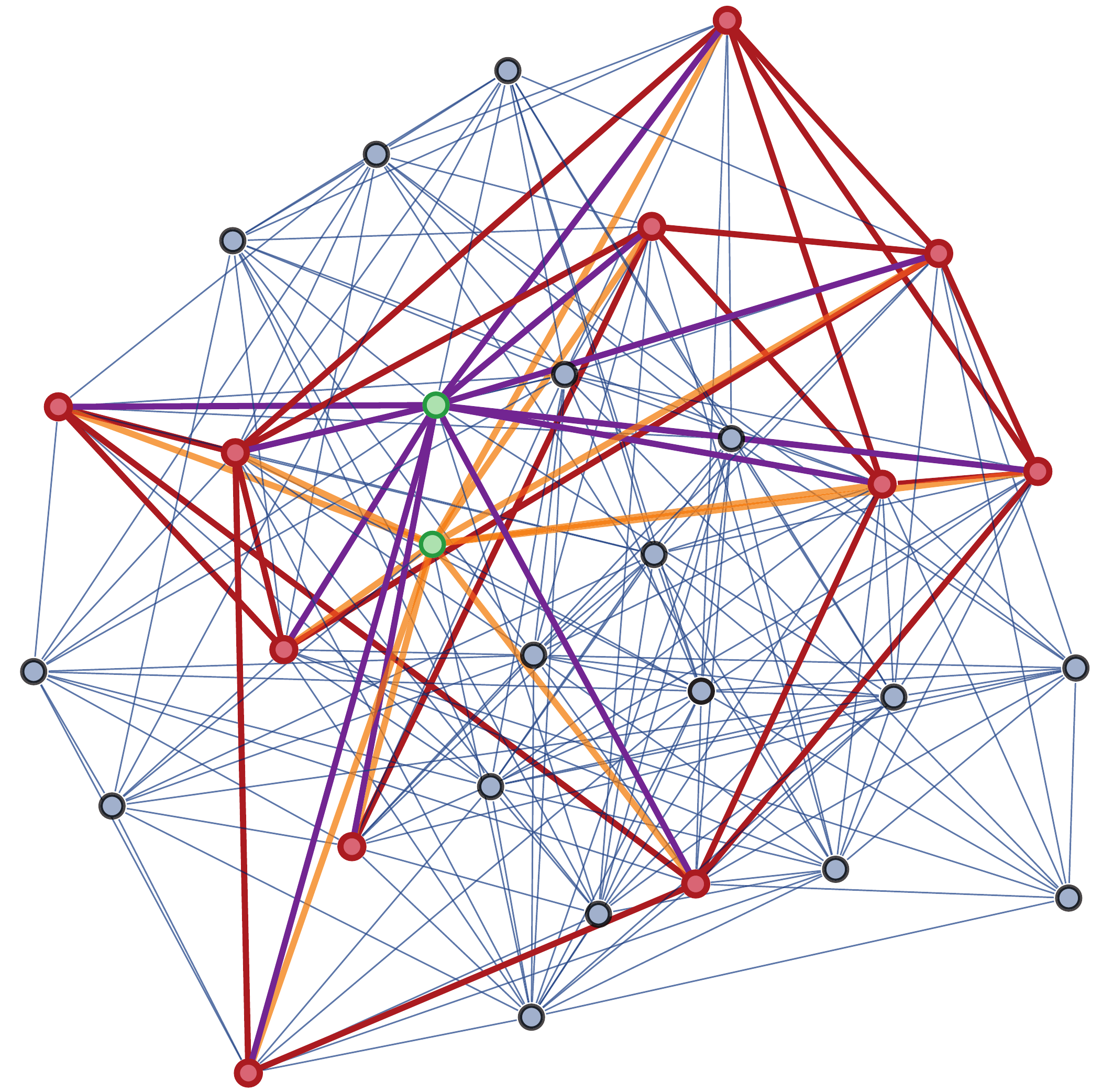}\\
\caption{A graph in $\AS$.  A block exhibiting inclusion in $\AS$ is
shown in bold; the two (left-centrally located)
\emph{ends} of the bold \emph{block} are highlighted.}\label{fig:AS}
\end{figure}

\begin{defn}[Block, core, ends]\label{defn:block_etc}
A \emph{block} in $\Gamma$  is a subgraph of the form $B(w,w')=\{w,w'\}\star
\Gamma'$ where $\{w,w'\}$ is a pair of non-adjacent vertices  and $\Gamma'\subset \Gamma$ is a subgraph of $\Gamma$ induced by a set of vertices adjacent to both $w$ and $w'$.  A block is \emph{maximal} if $V(\Gamma')=\link_{\Gamma}(w)\cap \link_{\Gamma}(w')$. Given a block
$B=B(w,w')$, we refer to the non-adjacent vertices $w,w'$ as the \emph{ends of
$B$}, denoted $\blockend(B)$, and the vertices of $\Gamma'$ as the
\emph{core of $B$}, denoted $\core(B)$.
\end{defn}

Note that $\AS\subsetneq\CFS$, indeed Theorem~\ref{thm:CFS} and
Theorem~\ref{thm:not_augsusp} show that there must exist graphs in $\CFS$ that are not in $\AS$.  Here we explain how any graph in $\AS$ is in
$\CFS$.

\begin{lem}\label{lem:AS=>CFS}
Let $\Gamma$ be a graph in $\AS$. Then $\Gamma \in \CFS$.
\end{lem}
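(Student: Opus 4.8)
The plan is to produce an explicit decomposition $\Gamma=\Delta\star K$, with $K$ a clique and $\Delta$ a non-empty induced subgraph, and then to identify a connected component of $\square(\Delta)$ whose $4$--cycles cover $V(\Delta)$; throughout I write $\Delta$ for the non-clique factor of Definition~\ref{defn:CFS}, to avoid clashing with the core $\Gamma'$ of a block. Take $K$ to be the set of vertices of $\Gamma$ adjacent to every other vertex. Then $K$ is a clique, and since $w\not\sim w'$ we have $w,w'\notin K$, so $\Gamma=\Delta\star K$ with $\Delta=\Gamma-K$ non-empty. The first thing I would record is that in any induced $4$--cycle each vertex is non-adjacent to the opposite vertex; hence no vertex of an induced $4$--cycle of $\Gamma$ lies in $K$, so every induced $4$--cycle of $\Gamma$ already lies in $\Delta$ and $\square(\Delta)=\square(\Gamma)$. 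I would also note, straight from the definition, that two distinct induced $4$--cycles are adjacent in $\square(\Gamma)$ as soon as their vertex sets share a pair of non-adjacent vertices (such a pair is automatically a common diagonal). So it suffices to exhibit a component $C$ of $\square(\Gamma)$ the union of whose $4$--cycles is all of $V(\Delta)$.

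Before that, I would make a harmless reduction: we may assume the block $B=\{w,w'\}\star\Gamma'$ witnessing $\Gamma\in\AS$ is maximal, i.e.\ $V(\Gamma')=\link_\Gamma(w)\cap\link_\Gamma(w')$. Enlarging $\Gamma'$ to this intersection keeps it a non-clique (it still contains the old, non-clique $\Gamma'$) and only shrinks $\Gamma-B$, so the defining property of $\AS$ is preserved. Maximality is used only to guarantee that every $v\in\Gamma-B$ is non-adjacent to $w$ or to $w'$.

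Now fix non-adjacent $a,b\in\Gamma'$, let $S_0=\{w,a,w',b\}$ (an induced $4$--cycle, with diagonals $\{w,w'\}$ and $\{a,b\}$), and let $C$ be the component of $\square(\Gamma)$ containing $S_0$. The heart of the proof is to show that every $v\in V(\Delta)$ lies in the support of some $4$--cycle of $C$, by cases. If $v\in\{w,w',a,b\}$, use $S_0$. If $v\in\Gamma'$ and $v\not\sim u$ for some $u\in\Gamma'$, then $\{w,v,w',u\}$ is an induced $4$--cycle sharing the non-adjacent pair $\{w,w'\}$ with $S_0$, hence lies in $C$. In the remaining cases ($v\in\Gamma'$ adjacent to every vertex of $\Gamma'$, or $v\in\Gamma-B$) I would first pick a vertex $z$ with $z\not\sim v$: in the first case $z$ is any vertex witnessing $v\notin K$, which must lie in $\Gamma-B$ because $v$ is adjacent to all of $B\setminus\{v\}$; in the second case $z\in\{w,w'\}$, available by maximality. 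Applying the $\AS$ hypothesis to $z$ (first case) or to $v$ (second case) yields non-adjacent $c,d\in\Gamma'$ with $v\sim c,\ v\sim d$ and $z\sim c,\ z\sim d$; then $\{v,c,z,d\}$ is an induced $4$--cycle through $v$ which shares the non-adjacent pair $\{c,d\}$ with the induced $4$--cycle $\{w,c,w',d\}$, and the latter shares $\{w,w'\}$ with $S_0$ (or equals it). Hence $\{v,c,z,d\}\in C$. Having covered every $v\in V(\Delta)$, we conclude $\Gamma=\Delta\star K\in\CFS$.

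I expect the genuine work to be the (entirely elementary) bookkeeping in that last paragraph: verifying in each case that the four vertices listed are pairwise distinct, that the two edges declared absent really are absent --- so that each configuration is an \emph{induced} $4$--cycle and not, say, a triangle with a pendant or $K_4$ minus an edge --- and that consecutive $4$--cycles in each short chain meet in two \emph{non-adjacent} vertices rather than in an edge or a single vertex. The maximality reduction and the careful choice of the auxiliary vertex $z$ are exactly what make all these checks come out right; there is no serious conceptual obstacle beyond getting them in order.
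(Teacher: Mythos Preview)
Your proof is correct and follows essentially the same approach as the paper's: both build a component of $\square(\Gamma)$ from the $4$--cycles $\{w,c,w',d\}$ with $c,d$ non-adjacent in $\Gamma'$, then attach the remaining vertices using maximality of the block and the $\AS$ condition. Your decision to define $K$ globally at the outset and to note $\square(\Delta)=\square(\Gamma)$ is a minor reorganization of the paper's argument, which instead splits $\Gamma'=A\star D$ and lets the clique $K$ emerge implicitly at the end; the case analyses match up exactly.
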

\begin{proof}
Let $B(w,w')=\{w,w'\}\star \Gamma'$ be a maximal block in $\Gamma$ witnessing $\Gamma \in \AS$. Write $\Gamma'=A\star D$, where
$D$ is the collection of all vertices of $\Gamma'$ which are adjacent to every other vertex of $\Gamma'$. Note that $D$ induces a clique in $\Gamma$. By definition of the $\AS$ property $\Gamma'$ is not a clique, whence $A$ contains at least one pair of non-adjacent vertices. Furthermore by the definition of $D$, for every vertex $a\in A$ there exists $a' \in A$ with $\{a,a'\}$ non-adjacent. The $4$--cycles induced by $\{w,w'a,a'\}$ for non-adjacent pairs $a, a'$ from $A$ are connected in $\square(\Gamma)$. Denote the component of $\square(\Gamma)$ containing them by $C$.

Consider now a vertex $v\in \Gamma - B(w,w')$. Since $B(w,w')$ is maximal, we have that at least one of $w,w'$ is not adjacent to $v$ --- without loss of generality, let us assume that it is $w$. By the $\AS$ property, $v$ must be adjacent to a pair $a,a'$ of non-adjacent vertices from $A$. Then $\{w, a,a', v\}$ induces a $4$--cycle, which is adjacent to $\{w,w',a,a'\}\in\square(\Gamma)$ and hence lies in $C$.

Finally consider a vertex $d \in D$. If $v$ is adjacent to all vertices of $\Gamma$, then $\Gamma$ is the join of a graph with a clique containing $d$, and we can ignore $d$ with respect to establishing the $\CFS$ property. Otherwise $d$ is not adjacent to some $v\in \Gamma-B(w,w')$. By the $\AS$ property, $v$ is connected by edges to a pair of non-adjacent vertices $\{a,a'\}$ from $A$. Thus $\{d,a,a',v\}$ induces a $4$--cycle. Since (as established above) there is some $4$--cycle in $C$ containing $\{a,a',v\}$, we have that $\{d,a,a',v\} \in C$ as well. Thus $\Gamma= \Gamma'' \star K$, where $K$ is a clique and $V(\Gamma'')$ is covered by the union of the $4$--cycles in $C$, so that $\Gamma \in \CFS$ as claimed.
\end{proof}

\section{Geometry of right-angled Coxeter groups}\label{sec:application_to_racg}

If $\Gamma$ is a finite simplicial graph, the
\emph{right-angled Coxeter group} $W_\Gamma$ \emph{presented by}
$\Gamma$ is the group defined by the presentation
$$\left\langle\Vertices(\Gamma)\mid\{w^2,\,uvu^{-1}v^{-1}:u,v,w\in\Vertices(\Gamma),\{u,v\}\in\Edges(\Gamma)\right\rangle.$$
A result of M\"{u}hlherr~\cite{Muhlherr:aut_cox} shows that the correspondence $\Gamma\leftrightarrow W_\Gamma$ is bijective.  We can thus speak of ``the random right-angled Coxeter group'' --- it is the right-angled Coxeter group presented by the random graph.  (We emphasize that the above presentation provides the \emph{definition} of a right-angled Coxeter group: this definition abstracts the notion of a reflection group -- a subgroup of a linear group generated by reflections -- but infinite Coxeter groups need not admit representations as reflection groups.)

Recent papers have discussed the geometry of Coxeter groups,
especially relative hyperbolicity and closely-related quasi-isometry
invariants like divergence and thickness,
cf.~\cite{BehrstockHagenSisto:coxeter,Caprace:relatively_hyperbolic,
DaniThomas:divcox}.  In particular, Dani--Thomas introduced a
property they call \emph{having a component of full support} for
triangle-free graphs (which is exactly the triangle-free version of
$\CFS$) and they prove that under the assumption $\Gamma$ is triangle-free,
$W_\Gamma$ is thick
of order at most~$1$ if and only if it has quadratic divergence if and only if
$\Gamma$ is in
$\mathcal{CFS}$, see \cite[Theorem~1.1 and
Remark~4.8]{DaniThomas:divcox}. Since the
densities where random graphs are triangle-free are also square-free
(and
thus not $\CFS$ --- in fact, they are disconnected!), we need the following slight generalization of the
result of Dani--Thomas:

\begin{prop}\label{prop:thick_of_order_1}
Let $\Gamma$ be a finite simplicial graph.  If $\Gamma$ is in
$\mathcal{CFS}$ and $\Gamma$ does not decompose as a nontrivial
join, then $W_\Gamma$ is thick of order exactly $1$.
\end{prop}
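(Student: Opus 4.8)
The plan is to realize $W_\Gamma$ as a group that is thick of order at most $1$ by producing an explicit finite network of $D_\infty\times D_\infty$--subgroups coming from the squares of a single component of $\square(\Gamma)$, and then to use the join hypothesis to exclude order $0$. First I would invoke the assumption that $\Gamma$ is not a nontrivial join to trivialize the clique factor in the $\CFS$ decomposition: writing $\Gamma=\Gamma'\star K$ with $K$ a clique and $\Gamma'$ nonempty as in Definition~\ref{defn:CFS}, if $K$ were nonempty then $\Gamma=\Gamma'\star K$ would be a nontrivial join, so $K=\emptyset$, $\Gamma=\Gamma'$, and $\square(\Gamma)$ has a connected component $C$ the supports of whose $4$--cycles cover $V(\Gamma)$; note $C$ is finite since $\Gamma$ is. For each $F\in C$ let $W_F\le W_\Gamma$ be the special subgroup generated by the support of $F$. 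Since the support of $F$ induces a $4$--cycle, i.e.\ the join of two pairs of mutually non-adjacent vertices, we have $W_F\cong D_\infty\times D_\infty$, which is thick of order $0$ (being the direct product of two infinite groups); moreover $W_F$ is undistorted in $W_\Gamma$, as special subgroups of a right-angled Coxeter group are convex in the Davis complex \cite{Davis:book}. Thus $\mathcal H=\{W_F:F\in C\}$ is a finite collection of undistorted subgroups, each thick of order $0$.

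Next I would verify that $\mathcal H$ witnesses thickness of order at most $1$. Since the union of the supports of the $4$--cycles in $C$ equals $V(\Gamma)$, the subgroups in $\mathcal H$ generate $W_\Gamma$. If $F,F'$ are adjacent in $\square(\Gamma)$, then their supports meet in a pair of non-adjacent vertices, and since $W_A\cap W_B=W_{A\cap B}$ for special subgroups of a right-angled Coxeter group, $W_F\cap W_{F'}$ is an infinite dihedral group; in particular $W_F$ and $W_{F'}$ have infinite coarse intersection. Because $C$ is connected, the graph on $\mathcal H$ recording infinite coarse intersection is connected, so it remains only to verify the coset-chaining condition. Given $g,g'\in W_\Gamma$, write $g^{-1}g'=h_1\cdots h_m$ with $h_i\in W_{F_i}$, $F_i\in C$, and set $x_i=gh_1\cdots h_i$, so that $x_0=g$, $x_m=g'$, and $x_{i-1},x_i\in x_{i-1}W_{F_i}$; between the cosets $x_{i-1}W_{F_i}$ and $x_iW_{F_{i+1}}$ interpolate the cosets $x_iW_G$ as $G$ runs along a path from $F_i$ to $F_{i+1}$ in $C$, so that consecutive cosets in the resulting chain either coincide or differ by a diagonal overlap and hence share an infinite dihedral subgroup. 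This produces a chain of cosets from $g$ to $g'$ with consecutive terms of infinite coarse intersection, so $W_\Gamma$ is thick of order at most $1$.

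It then remains to rule out thickness of order $0$. For this I would use that a right-angled Coxeter group decomposes as a direct product of two infinite subgroups only if its defining graph is a nontrivial join: a geometric product decomposition of the group yields one of the Davis complex (a CAT(0) cube complex), which in turn partitions the generating set into two commuting subsets, exhibiting $\Gamma$ as a join. Since $\Gamma$ is assumed not to be a nontrivial join, $W_\Gamma$ is not thick of order $0$; alternatively, this can be extracted from the divergence computations of \cite{DaniThomas:divcox} together with \cite{BDM}. Hence $W_\Gamma$ is thick of order exactly $1$.

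The step I expect to be the main obstacle is making the network argument go through in the presence of triangles, which is exactly where this statement generalizes the triangle-free result of Dani--Thomas: once triangles are allowed, induced $4$--cycles may overlap in more configurations than in \cite{DaniThomas:divcox} (cf.\ Remark~\ref{rem:cfs_dt}), and one must confirm that an induced $4$--cycle still spans a $D_\infty\times D_\infty$ subgroup and that a diagonal overlap of two induced $4$--cycles still forces the corresponding special subgroups to intersect in an infinite dihedral group. Both reduce to the fact that a special subgroup of a right-angled Coxeter group depends only on, and is convex over, the induced subgraph on its support; granting this, the surrounding bookkeeping in the chaining step is routine.
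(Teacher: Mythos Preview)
Your proposal is correct and follows the same conceptual route as the paper's proof, which consists of two citations to \cite{BehrstockHagenSisto:coxeter}: Theorem~II there gives thickness of order at most $1$ via exactly the network of $D_\infty\times D_\infty$ subgroups indexed by a component of $\square(\Gamma)$ and chained along diagonals that you spell out by hand, and Proposition~2.11 there supplies the lower bound from the non-join hypothesis. What your version buys is self-containment in place of the paper's black-box citations.

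One point to tighten: your justification that ``$W_\Gamma$ is a direct product of two infinite subgroups $\Rightarrow$ $\Gamma$ is a join'' is sketched too loosely. An abstract direct product decomposition of a group acting geometrically on a CAT(0) cube complex does not automatically induce a product splitting of the complex; one needs a genuine splitting theorem, or alternatively the algebraic fact that an irreducible infinite Coxeter group with trivial centre is directly indecomposable as an abstract group (Paris). Your fallback to \cite{DaniThomas:divcox} does not close this either, since that paper treats only triangle-free $\Gamma$. The conclusion is nonetheless correct, and the cleanest fix is the paper's: cite \cite[Proposition~2.11]{BehrstockHagenSisto:coxeter} directly.
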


\begin{proof}
Theorem~II of~\cite{BehrstockHagenSisto:coxeter} shows immediately
that, if $\Gamma\in\CFS$, then $W_\Gamma$ is thick, being formed by
a series of \emph{thick unions} of $4$--cycles; since each
$4$--cycle is a join, it follows that $\Gamma$ is thick of order at
most $1$.  On the other
hand,~\cite[Proposition~2.11]{BehrstockHagenSisto:coxeter} shows
that $W_\Gamma$ is thick of order at least $1$ provided $\Gamma$ is
not a join.
\end{proof}
% \begin{com}The converse would go here. ---MFH\end{com}

Our results about random graphs yield:

\begin{cor}\label{cor:high_density}
There exists $k>0$ so that if $p\colon\naturals\rightarrow(0,1)$ and $\epsilon>0$ are such that $\sqrt{\frac{k\log
n}{n}}\leq p(n)\leq 1-\ds\frac{(1-\epsilon)\log{n}}{n}$ for all sufficiently large $n$, then for $\Gamma\in\G(n,p)$ the group $W_\Gamma$ is asymptotically almost
surely thick of order exactly $1$ and hence has quadratic divergence.
\end{cor}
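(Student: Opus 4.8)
The plan is to combine the two genericity theorems for $\CFS$ graphs with Proposition~\ref{prop:thick_of_order_1}, the only missing ingredient being a check that a random graph in the stated density range a.a.s.\ does not decompose as a nontrivial join. First I would invoke Theorem~\ref{thm:CFS}: since there is a constant so that $p(n)>5\sqrt{\log n/n}$ holds whenever $p(n)\ge\sqrt{k\log n/n}$ for $k$ large enough, and since the hypothesis $p(n)\le 1-(1-\epsilon)\log n/n$ forces $(1-p)n^2\to\infty$, Theorem~\ref{thm:CFS} gives that $\Gamma\in\G(n,p)$ is a.a.s.\ in $\CFS$ across the whole range. (For the subrange where $p$ is large — in particular bounded away from $0$ — one uses that $\AS\subseteq\CFS$ via Lemma~\ref{lem:AS=>CFS} together with Theorem~\ref{thm:as_upper2}; this is the point of the parenthetical remark in the statement of Theorem~\ref{thm:CFS} that its proof relies on the $\AS$ result for large $p$.)

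Next I would verify that $\Gamma$ a.a.s.\ does not decompose as a nontrivial join, i.e.\ that its complement $\overline\Gamma$ is a.a.s.\ connected. The complement of $\G(n,p)$ is $\G(n,1-p)$, so this is exactly the statement that $\G(n,1-p)$ is a.a.s.\ connected. By the Erd\H os--R\'enyi connectivity threshold (the Theorem quoted in the introduction), connectivity holds a.a.s.\ once $1-p(n)\ge (1+\delta)\log n/n$ for some fixed $\delta>0$; and the hypothesis $p(n)\le 1-(1-\epsilon)\log n/n$ is slightly weaker than this near the boundary, so I would be a little careful here — but since we are free to choose $\epsilon$, or alternatively one can appeal to the sharper form of the connectivity threshold (a.a.s.\ $\overline\Gamma$ is connected once $1-p \ge (\log n + \omega(1))/n$), the hypothesis as stated suffices. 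Thus $\overline\Gamma$ is a.a.s.\ connected, so $\Gamma$ a.a.s.\ admits no nontrivial join decomposition.

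Having established that a.a.s.\ $\Gamma\in\CFS$ and $\Gamma$ is not a nontrivial join, Proposition~\ref{prop:thick_of_order_1} applies directly and yields that $W_\Gamma$ is a.a.s.\ thick of order exactly $1$. Finally, thickness of order $1$ implies quadratic divergence by the results cited in the introduction (thickness of order $n$ bounds divergence by a degree-$(n{+}1)$ polynomial, so order $1$ gives at most quadratic divergence; and order at least $1$, i.e.\ not thick of order $0$, rules out linear divergence, while a lower bound of quadratic divergence for one-ended $\CFS$-type right-angled Coxeter groups is standard — it also follows from Dani--Thomas~\cite{DaniThomas:divcox} in the triangle-free case and from the general machinery of~\cite{BehrstockHagenSisto:coxeter}). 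The main obstacle, such as it is, is the boundary bookkeeping in the second step: making sure the upper constraint on $p$ is genuinely strong enough to force $\overline\Gamma$ connected a.a.s., which is why the statement carries the auxiliary parameter $\epsilon$ and why the constant $k$ in the lower bound must be taken sufficiently large to clear the $5\sqrt{\log n/n}$ threshold of Theorem~\ref{thm:CFS}.
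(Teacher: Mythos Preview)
Your proposal is correct and follows essentially the same approach as the paper: invoke Theorem~\ref{thm:CFS} (with Theorem~\ref{thm:as_upper2} and Lemma~\ref{lem:AS=>CFS} handling large $p$) to get $\Gamma\in\CFS$ a.a.s., use the Erd\H{o}s--R\'enyi connectivity threshold on the complement to rule out a nontrivial join, and then apply Proposition~\ref{prop:thick_of_order_1}. The only minor deviation is the divergence conclusion, where the paper cites \cite{BehrstockDrutu:thick2} directly (using that $W_\Gamma$ is CAT(0)) rather than arguing via separate upper and lower bounds as you do.
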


\begin{proof}
Theorem~\ref{thm:CFS}
shows that any such $\Gamma$ is asymptotically
almost surely in $\CFS$, whence $W_\Gamma$ is thick of order at most
$1$. We emphasize that to apply this result for sufficiently
large functions $p(n)$ the proof of Theorem~\ref{thm:CFS} requires an application of Theorem~\ref{thm:as_upper2} to establish that $\Gamma$ is a.a.s.\ in $\AS$ and hence in $\CFS$ by Lemma~\ref{lem:AS=>CFS}.

By Proposition~\ref{prop:thick_of_order_1}, to show that the
order of thickness is exactly one,
it remains to rule out the possibility that $\Gamma$
decomposes as a nontrivial join.  However, this occurs if and only
if the complement graph is disconnected, which
asymptotically almost surely does not occur whenever $p(n)\le 1-\frac{(1-\epsilon)\log{n}}{n}$, by the sharp threshold for connectivity of $\G(n,1-p)$
established by Erd{\H o}s and R\'enyi in~\cite{ErdosRenyi3}.  Since this holds for $p(n)$ by assumption, we conclude that asymptotically almost surely, $W_\Gamma$ is
thick of order at least $1$.

Since $W_\Gamma$ is CAT$(0)$ and thick of order exactly $1$,
the consequence about divergence now follows from
\cite{BehrstockDrutu:thick2}.
\end{proof}

This corollary significantly generalizes Theorem~3.10
of~\cite{BehrstockHagenSisto:coxeter}, which established that, if
$\Gamma\in\G(n,\frac{1}{2})$, then $W_\Gamma$ is asymptotically
almost surely thick.  Theorem~3.10
of~\cite{BehrstockHagenSisto:coxeter} does not provide effective
bounds on the order of thickness and its proof is significantly more complicated than the proof of
Corollary~\ref{cor:high_density} given above  --- indeed, it required
several days of computation (using 2013 hardware) to establish the base case of an inductive argument.

\begin{rem}[Higher-order thickness]\label{relhyp}  A lower bound of $p(n)=n^{-\frac{5}{6}}$ for membership
in a larger class of graphs whose corresponding Coxeter groups are
thick can be found in \cite[Theorem
3.4]{BehrstockHagenSisto:coxeter}.  In fact, this argument can be
adapted to give a simple proof that a.a.s.\  thickness does not
occur at densities below $n^{-\frac{3}{4}}$.  The correct threshold for a.a.s.\  thickness is, however, unknown.
\end{rem}

% \begin{question}
%     In light of Theorem~\ref{thm:as_upper}, we ask whether there is
%     some natural group theoretic interpretation of property $\AS$.
% \end{question}

\begin{rem}[Random graph products versus random presentations]
Corollary \ref{cor:high_density} and Remark \ref{relhyp} show that the
random graph model for producing random right-angled Coxeter groups
generates groups with radically different geometric properties.  This
is in direct contrast to other methods of producing random groups,
most notably Gromov's random presentation model
\cite{Gromov:asymptotic,Gromov:randomgroups} where, depending on the
density of relators, groups are either almost surely hyperbolic or
finite (with order at most $2$). This contrast speaks to the merits
of considering a random right-angled Coxeter group as a natural
place to study random groups. For instance, Calegari--Wilton
 recently showed that in the
Gromov model a random group contains many subgroups which are
isomorphic to the
fundamental group of a compact hyperbolic 3--manifold
\cite{CalegariWilton:3mfldeverywhere}; does the random
right-angled Coxeter group also contain such subgroups?

Right-angled Coxeter groups, and indeed thick ones, are closely
related to Gromov's random groups in another way.  When the parameter
for a Gromov random groups is $<\frac{1}{6}$ such a group is
word-hyperbolic \cite{Gromov:asymptotic} and acts properly and
cocompactly on a CAT(0) cube complex \cite{OW:random}.  Hence the
Gromov random group virtually embeds in a right-angled Artin
group~\cite{Agol:virtual_haken}.  Moreover, at such parameters such a random group is one-ended \cite{DGP}, whence the associated right-angled Artin group is as well. By
\cite{BehrstockCharney} this right-angled Artin group  is thick of
order 1. Since any right-angled Artin group is commensurable with
a right-angled Coxeter group \cite{DavisJanuszkiewicz}, one obtains a
thick of order $1$ right-angled Coxeter group containing the randomly presented group.
\end{rem}

\section{Genericity of $\AS$}\label{sec:improved_as}
We will use the following standard Chernoff bounds, see e.g.~\cite[Theorems~A.1.11 and A.1.13]{AlonSpencer:book}:
\begin{lem}[Chernoff bounds]\label{lem:chernoff}
Let $X_1,\ldots,X_n$ be independent identically distributed random
variables taking values in $\{0,1\}$, let $X$ be their sum, and let
$\mu=\E[X]$.  Then for any $\delta\in(0,2/3)$
$$\Pb\left(\vert X-\mu\vert \geq \delta\mu\right)\leq
2e^{-\frac{\delta^2\mu}{3}}.$$
\end{lem}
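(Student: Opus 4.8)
The plan is to treat this as a standard, citable fact rather than to reprove it from scratch, since the statement is explicitly attributed to \cite[Theorems~A.1.11 and A.1.13]{AlonSpencer:book}. Nonetheless, a self-contained sketch of the argument is short and worth recording. The approach is the classical exponential-moment (Bernstein--Chernoff) method: for the upper tail $\Pb(X \geq (1+\delta)\mu)$, fix $t > 0$, apply Markov's inequality to $e^{tX}$, and use independence to factor $\E[e^{tX}] = \prod_{i=1}^n \E[e^{tX_i}]$. Writing $p_i = \Pb(X_i = 1)$, one has $\E[e^{tX_i}] = 1 + p_i(e^t - 1) \leq \exp\bigl(p_i(e^t-1)\bigr)$, so that $\E[e^{tX}] \leq \exp\bigl(\mu(e^t-1)\bigr)$ where $\mu = \sum_i p_i = \E[X]$ (here the $X_i$ are in fact i.i.d., so $p_i \equiv p$, but the bound needs only $\E[X]=\mu$). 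Hence $\Pb(X \geq (1+\delta)\mu) \leq \exp\bigl(\mu(e^t-1) - t(1+\delta)\mu\bigr)$, and optimizing over $t$ by taking $t = \log(1+\delta)$ gives the bound $\bigl(e^\delta (1+\delta)^{-(1+\delta)}\bigr)^\mu$.

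The remaining step is purely analytic: one shows $e^\delta(1+\delta)^{-(1+\delta)} \leq e^{-\delta^2/3}$ for $\delta$ in the relevant range, i.e.\ that $\delta - (1+\delta)\log(1+\delta) \leq -\delta^2/3$. This follows by a Taylor expansion of $(1+\delta)\log(1+\delta)$ around $\delta = 0$ and checking the inequality on $(0, 2/3)$; the constraint $\delta < 2/3$ (rather than all $\delta > 0$) is exactly what makes the clean constant $3$ in the exponent valid. The lower tail $\Pb(X \leq (1-\delta)\mu)$ is handled symmetrically with $t < 0$, optimizing to get $\bigl(e^{-\delta}(1-\delta)^{-(1-\delta)}\bigr)^\mu \leq e^{-\delta^2/2} \leq e^{-\delta^2/3}$; combining the two tails with a union bound yields the factor $2$ and the stated two-sided inequality $\Pb(\vert X - \mu\vert \geq \delta\mu) \leq 2e^{-\delta^2\mu/3}$.

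The only mild subtlety — and the step I would flag as the place where care is needed — is the elementary-calculus verification that the optimized exponential rate is dominated by $-\delta^2/3$ uniformly on $(0,2/3)$; everything else (Markov's inequality, independence, choice of $t$) is routine. Since this lemma is invoked only as a black-box estimating tool in the sequel, I would simply cite \cite{AlonSpencer:book} for the precise statement and omit the computation, noting that the restriction $\delta \in (0,2/3)$ is what the cited form of the bound requires. No proof is included in the text beyond this pointer.
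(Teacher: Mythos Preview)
Your proposal is correct and matches the paper's treatment: the paper gives no proof of this lemma at all, simply stating it with the citation to \cite[Theorems~A.1.11 and A.1.13]{AlonSpencer:book} and moving on. Your additional sketch of the exponential-moment argument is accurate but goes beyond what the paper includes.
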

\begin{cor}\label{cor:concentration_link_sizes}
Let $\varepsilon, \delta>0$ be fixed.
\begin{enumerate}[(i)]
\item   If $p(n)\ge \left(\frac{(6+\varepsilon)\log n}{\delta^2
n}\right)^{1/2}$, then a.a.s.\ for all pairs of distinct vertices
$\{x,y\}$ in $\Gamma \in \G(n,p)$ we have  $\left\vert \vert \link_{\Gamma}(x)\cap \link_{\Gamma}(y)\vert-p^2(n-2)\right\vert < \delta p^2(n-2)$.
\item   If $p(n)\ge \left(\frac{(9+\varepsilon)\log n}{\delta^2
n}\right)^{1/3}$, then a.a.s.\ for all triples of distinct vertices
$\{x,y, z\}$ in $\Gamma \in \G(n,p)$ we have  $\left\vert \vert \link_{\Gamma}(x)\cap \link_{\Gamma}(y)\cap \link_{\Gamma}(z)\vert-p^3(n-3)\right\vert < \delta p^3(n-3)$.
\end{enumerate}
\end{cor}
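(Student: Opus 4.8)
The plan is to prove both statements by a routine first-moment/union-bound argument combining the Chernoff bounds of Lemma~\ref{lem:chernoff} with a count of the number of vertex-pairs (respectively vertex-triples). I will carry this out for part~(i); part~(ii) is entirely analogous with cubes in place of squares.

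\textbf{Step 1: Fix a pair and apply Chernoff.} Fix distinct vertices $x,y\in\Gamma\in\G(n,p)$. For each of the other $n-2$ vertices $z$, let $X_z$ be the indicator of the event that $z\in\link_\Gamma(x)\cap\link_\Gamma(y)$, i.e.\ that both edges $\{x,z\}$ and $\{y,z\}$ are present. The edges $\{x,z\}$ and $\{y,z\}$ are distinct for distinct $z$ and distinct from each other, so the $X_z$ are independent $\{0,1\}$-valued random variables with $\E[X_z]=p^2$; hence $X:=\sum_z X_z=\vert\link_\Gamma(x)\cap\link_\Gamma(y)\vert$ has mean $\mu=p^2(n-2)$. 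Since $\delta$ is a fixed constant I may assume $\delta<2/3$ (shrinking $\delta$ only strengthens the conclusion), so Lemma~\ref{lem:chernoff} gives
$$\Pb\Big(\big\vert X-p^2(n-2)\big\vert\geq \delta p^2(n-2)\Big)\leq 2e^{-\delta^2 p^2(n-2)/3}.$$

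\textbf{Step 2: Union bound over all pairs.} There are $\binom{n}{2}<n^2$ pairs $\{x,y\}$, so the probability that the desired concentration fails for \emph{some} pair is at most $2n^2 e^{-\delta^2 p^2(n-2)/3}$. It remains to check this tends to $0$ under the hypothesis $p(n)\geq\big((6+\varepsilon)\log n/(\delta^2 n)\big)^{1/2}$. For such $p$ we have $\delta^2 p^2 n\geq (6+\varepsilon)\log n$, and using $n-2\geq n/2$ for large $n$ (which only costs a factor absorbed by $\varepsilon$; more carefully, $\delta^2p^2(n-2)\geq (6+\varepsilon)\log n\cdot\frac{n-2}{n}\geq (6+\varepsilon/2)\log n$ eventually), we get
$$2n^2 e^{-\delta^2 p^2(n-2)/3}\leq 2n^2 e^{-(2+\varepsilon/6)\log n}=2n^{-\varepsilon/6}\to 0.$$
This proves part~(i).

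\textbf{Step 3: Part (ii).} For a fixed triple $\{x,y,z\}$, the indicator that $w\in\link_\Gamma(x)\cap\link_\Gamma(y)\cap\link_\Gamma(z)$ requires the three distinct edges $\{x,w\},\{y,w\},\{z,w\}$ to be present, giving independent indicators of mean $p^3$, so $\vert\link_\Gamma(x)\cap\link_\Gamma(y)\cap\link_\Gamma(z)\vert$ has mean $p^3(n-3)$. Chernoff gives failure probability at most $2e^{-\delta^2 p^3(n-3)/3}$ for each triple, and there are fewer than $n^3$ triples; the hypothesis $p(n)\geq\big((9+\varepsilon)\log n/(\delta^2 n)\big)^{1/3}$ yields $\delta^2 p^3 n\geq(9+\varepsilon)\log n$, so the union bound is at most $2n^3 e^{-(3+\varepsilon/4)\log n}\to 0$ for large $n$, completing part~(ii).

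There is no real obstacle here: the only points requiring care are (a) verifying that the relevant indicator variables are genuinely independent — this holds because each involves a distinct edge of $\Gamma$, all of which are mutually independent in $\G(n,p)$ — and (b) bookkeeping the constants $6+\varepsilon$ and $9+\varepsilon$ so that the polynomial factors $n^2$ and $n^3$ from the union bound are beaten by the exponential decay, which is exactly what those constants are chosen to guarantee (with the $\varepsilon$ providing the slack needed to absorb the $n-2$ versus $n$ and $n-3$ versus $n$ discrepancies). The argument is robust to replacing $n-2$, $n-3$ by $n$ up to adjusting $\varepsilon$, so I would state it cleanly with that reduction.
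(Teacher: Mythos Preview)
Your proof is correct and follows essentially the same route as the paper: fix a pair (respectively triple), express the intersection size as a sum of independent Bernoulli indicators, apply the Chernoff bound of Lemma~\ref{lem:chernoff}, then union-bound over the $\binom{n}{2}$ pairs (respectively $\binom{n}{3}$ triples), with the constants $6+\varepsilon$ and $9+\varepsilon$ chosen precisely so that the exponential beats the polynomial. The paper's own proof is slightly terser but otherwise identical.
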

\begin{proof}
For (i), let $\{x,y\}$ be any pair of distinct vertices.  For each
vertex $v \in \Gamma-\{x,y\}$, set $X_v$ to be the indicator function
of the event that $v\in \link_{\Gamma}(x)\cap \link_{\Gamma}(y)$, and
set $X=\sum_v X_v$ to be the size of $\link_{\Gamma}(x)\cap
\link_{\Gamma}(y)$.  We have $\mathbb{E}X=p^2(n-2)$ and so by the
Chernoff bounds above, $\Pr\left(\vert X-p^2(n-2)\vert\geq \delta
p^2(n-2)\right) \leq 2e^{-\frac{\delta^2p^2(n-2)}{3}}$.  Applying
Markov's inequality, the probability that there exists \emph{some} ``bad
pair'' $\{x,y\}$ in $\Gamma$ for which $\vert\link_{\Gamma}(x)\cap
\link_{\Gamma}(y)\vert$ deviates from its expected value by more than
$\delta p^2(n-2)$ is at most
\[\binom{n}{2}2e^{-\frac{\delta^2p^2(n-2)}{3}} =o(1),\]
provided $\delta^2 p^2n\ge (6+\varepsilon) \log n$ and $\varepsilon,
\delta>0$ are fixed. Thus for this range of $p=p(n)$, a.a.s.\ no such bad pair exists.  The proof of (ii) is nearly identical.
\end{proof}

\begin{lem}\label{lem:cliquesize} \begin{enumerate}[(i)]
		\item  Suppose $1-p\geq \frac{\log n}{2n}$. Then asymptotically almost surely, the order of a largest clique in  $\Gamma\in\G(n,p)$ is $o(n)$.
		\item Let $\eta$ be fixed with $0<\eta<1$. Suppose $1-p\geq \eta$. Then asymptotically almost surely, the order of a largest clique in $\Gamma \in \G(n,p)$ is $O(\log n)$.
			\end{enumerate}
\end{lem}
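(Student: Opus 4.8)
The plan is to prove both parts by the first moment method, so I would start by considering, for $\Gamma\in\G(n,p)$ and an integer $k\ge 2$, the random variable $X_k$ counting the cliques of order $k$ in $\Gamma$. Since any fixed $k$-set of vertices spans a clique with probability exactly $p^{\binom{k}{2}}$, linearity of expectation gives $\E[X_k]=\binom{n}{k}p^{\binom{k}{2}}$, and Markov's inequality gives $\Pr(X_k\ge 1)\le\E[X_k]$. Because a clique of order $\ell$ contains cliques of every order $k\le\ell$, it is enough in each part to exhibit a single test size $k=k(n)$, of the asserted order of magnitude, for which $\E[X_k]\to 0$: then a.a.s.\ $\Gamma$ has no clique of order $k$, hence none of larger order, which is the desired bound.

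For (ii), I would set $c=-\log(1-\eta)$, which is positive since $\eta<1$ and hence $p\le 1-\eta<1$. Bounding $\binom{n}{k}\le n^k$ and $p^{\binom{k}{2}}\le(1-\eta)^{\binom{k}{2}}=e^{-c\binom{k}{2}}$ yields
\[
\E[X_k]\le\exp\!\left(k\log n-\frac{c\,k(k-1)}{2}\right)=\exp\!\left(k\Bigl(\log n-\tfrac{c(k-1)}{2}\Bigr)\right),
\]
and choosing $k=\lceil 1+(4\log n)/c\rceil$ makes the bracketed factor at most $-\log n$, so $\E[X_k]\le e^{-k\log n}\to 0$. Hence a.a.s.\ the largest clique of $\Gamma$ has order less than $k=O(\log n)$.

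For (i), the hypothesis $1-p\ge\frac{\log n}{2n}$ together with $1-x\le e^{-x}$ gives $p^{\binom{k}{2}}\le\exp\!\left(-\tfrac{\log n}{2n}\binom{k}{2}\right)=\exp\!\left(-\tfrac{k(k-1)\log n}{4n}\right)$, and using the sharper bound $\binom{n}{k}\le(en/k)^{k}$ one obtains
\[
\E[X_k]\le\exp\!\left(k\log\tfrac{en}{k}-\tfrac{k(k-1)\log n}{4n}\right).
\]
I would then take $k=\lceil n/\log\log n\rceil$: the first term in the exponent is $o(n)$, while the second has order $\tfrac{n\log n}{(\log\log n)^2}$ and so dominates, forcing $\E[X_k]\to 0$; thus a.a.s.\ the largest clique has order at most $n/\log\log n+1=o(n)$. (If one prefers to read ``$o(n)$'' as ``for every fixed $\epsilon>0$, a.a.s.\ the largest clique has order $<\epsilon n$'', the identical computation with $k=\lceil\epsilon n\rceil$ works, the two exponent terms then being $O(n)$ and $\Theta(n\log n)$.)

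The computations above are routine; the only step needing a little thought is the choice of the test size $k$ in part (i): it must be large enough that the quadratic-in-$k$ term $\tfrac{k^{2}\log n}{4n}$ overwhelms the entropy term $k\log(en/k)$, yet still $o(n)$ in order to yield an $o(n)$ conclusion. Any $k$ with $n/\log n\ll k\ll n$ does the job, and $k=\lceil n/\log\log n\rceil$ is merely a convenient explicit choice. In part (ii) there is nothing subtle beyond noting that $\eta<1$ is exactly what makes $c>0$, so that the linear-in-$k$ gain $-c\,k(k-1)/2$ in the exponent is genuinely negative.
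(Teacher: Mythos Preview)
Your proof is correct and follows essentially the same first-moment approach as the paper: both bound $\binom{n}{k}p^{\binom{k}{2}}$ and apply Markov's inequality, with your part (ii) being virtually identical to the paper's, and your part (i) differing only in cosmetic ways (you use the bound $\binom{n}{k}\le(en/k)^k$ and a concrete test size $k=\lceil n/\log\log n\rceil$, whereas the paper uses the entropy bound and takes $k=\alpha n$ for fixed $\alpha>0$, which is exactly your parenthetical alternative). If anything, your explicit choice of $k$ in (i) yields a marginally sharper conclusion than the paper's ``for every fixed $\alpha$'' version.
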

\begin{proof} 
For (i), set $r=\alpha n$, for some $\alpha$ bounded away from $0$. Write $H(\alpha)=\alpha\log\frac{1}{\alpha}+(1-\alpha)\log\frac{1}{1-\alpha}$. Using the standard entropy bound $\binom{n}{\alpha n}\leq e^{H(\alpha)n}$ and our assumption for $(1-p)$, we see that the expected number of $r$-cliques in $\Gamma$ is
\begin{align*}\ds\binom{n}{r}p^{\binom{r}{2}}&\le e^{H(\alpha)n}e^{\log(1-(1-p)) \left(\frac{\alpha^2 n^2}{2}+O(n)\right)}\leq \exp\left( - \frac{\alpha^2}{2} n\log n +O(n)\right)=o(1).
\end{align*}
Thus by Markov's inequality, a.a.s.\ $\Gamma$ does not contain a clique of size $r$, and the order of a largest clique in $\Gamma$ is $o(n)$.

The proof of (ii) is similar: suppose $1-p>\eta$. Then for any $r\leq n$,
\begin{align*}
\binom{n}{r}p^{\binom{r}{2}}&< n^r(1-\eta)^{r(r-1)/2}=\exp\left(r \left(\log n -\frac{r-1}{2}\log \frac{1}{1-\eta}\right)\right),
\end{align*}
which for $\eta>0$ fixed and $r-1>\frac{2}{\log (1/(1-\eta))}(1+ \log n )$ is as most $n^{-\frac{2}{\log (1/(1-\eta))}}=o(1)$. We may thus conclude as above that a.a.s.\ a largest clique in $\Gamma$ has order $O(\log n)$.
\end{proof}

\begin{thm}[Genericity of $\AS$] \label{thm:as_upper2}
    Suppose
    $p(n)\ge(1+\epsilon)\left(\ds\frac{\log{n}}{n}\right)^{\frac{1}{3}}$
    for some $\epsilon>0$ and $(1-p)n^2\to\infty$.  Then,
    a.a.s.\ $\Gamma\in\G(n,p)$
    is in $\AS$.
\end{thm}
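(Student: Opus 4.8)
\emph{The overall plan.} I would exhibit, a.a.s., a non-adjacent pair of vertices $\{w,w'\}$ whose \emph{maximal} block $B(w,w')$ witnesses $\Gamma\in\AS$. Writing $\Gamma'=\link_\Gamma(w)\cap\link_\Gamma(w')$ for the core of that block, it is enough to produce (a.a.s.)\ a non-adjacent pair for which $\Gamma'$ is not a clique and $\link_\Gamma(v)\cap\Gamma'$ is not a clique for \emph{every} $v\notin\{w,w'\}$; since every vertex of $\Gamma-B(w,w')$ lies outside $\{w,w'\}$, this is strictly more than the definition of $\AS$ demands.

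\emph{Reduction to one fixed pair.} First I would fix an arbitrary pair $\{w,w'\}$ and condition on its being a non-edge; this conditioning touches only the edge $\{w,w'\}$, hence leaves $\Gamma'$ untouched. Corollary~\ref{cor:concentration_link_sizes}(i) applies (since $p\ge(1+\epsilon)(\log n/n)^{1/3}\gg(\log n/n)^{1/2}$) and gives $|\Gamma'|\ge m:=(1-\delta)p^2(n-2)$ a.a.s., for a small constant $\delta=\delta(\epsilon)>0$ to be chosen below. Conditioning further on $\Gamma'=S$ with $|S|\ge m$: the events conditioned on involve only edges at $w$ or $w'$, so for each $v\notin\{w,w'\}$ the edges from $v$ into $S$ and the edges inside $S$ remain i.i.d.\ $\mathrm{Bernoulli}(p)$; hence $T_v:=\link_\Gamma(v)\cap S$ has $|T_v|\sim\mathrm{Bin}(|S\setminus\{v\}|,p)$, and conditionally on $T_v$ it induces a clique with probability $p^{\binom{|T_v|}{2}}$. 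The same reasoning gives $\Pr(\Gamma'\text{ is a clique}\mid|\Gamma'|\ge m)\le p^{\binom m2}\le e^{-(1-p)\binom m2}=o(1)$, since $(1-p)p^4n^2\to\infty$ in the whole admissible range. So everything reduces to the estimate
\begin{equation}\label{eq:as-key}
\E\!\left[p^{\binom{|T_v|}{2}}\right]=o\!\left(\tfrac1n\right),
\end{equation}
which, via a union bound over the fewer than $n$ vertices $v$, shows that conditionally on $\{w,w'\}$ being a non-edge the maximal block $B(w,w')$ witnesses $\Gamma\in\AS$ with probability $1-o(1)$. To pass from one fixed pair to an adaptively chosen one, I would then run a first-moment argument: with $Y$ the number of non-edges of $\Gamma$ and $X$ the number of non-edges whose maximal block witnesses $\AS$, one has $\E[Y-X]=\binom n2(1-p)\cdot o(1)=o(\E Y)$ while $Y\sim\mathrm{Bin}\!\big(\binom n2,1-p\big)$ concentrates with $\E Y\to\infty$; Chernoff and Markov then give $X\ge\tfrac14\E Y\ge1$ a.a.s., i.e.\ $\Gamma\in\AS$ a.a.s.

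\emph{The key estimate, and why the exponent is $\tfrac13$.} The heart of the matter is \eqref{eq:as-key}, which is essentially tight. Set $\mu:=p\,|S\setminus\{v\}|=\Theta(p^3n)$; since $p^3n\ge(1+\epsilon)^3\log n$, for large $n$ we have $\mu\ge(1-2\delta)(1+\epsilon)^3\log n$. When $\mu\gg\log n$ (so $p$ is comfortably above the threshold), $|T_v|$ concentrates, and $\E[p^{\binom{|T_v|}{2}}]\le\Pr(|T_v|\le\mu/2)+p^{\binom{\lceil\mu/2\rceil}{2}}\le e^{-\mu/8}+e^{-(1-p)\mu^2/16}$ is $o(1/n)$ as long as $(1-p)n^2$ is at least a large multiple of $\log n$ — which holds in this regime unless $p$ is extremely close to $1$ (treated below). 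When $\mu=\Theta(\log n)$ — the delicate case, $p$ within a constant factor of $(1+\epsilon)(\log n/n)^{1/3}$ — I would expand $\E[p^{\binom{|T_v|}{2}}]\le\Pr(|T_v|\le1)+\sum_{j\ge2}\Pr(|T_v|=j)\,p^{\binom j2}$, bound $\Pr(|T_v|=j)\le\tfrac{\mu^j}{j!}e^{-\mu}e^{O(pj)}$, and note that since $p\to0$ each term with $j\ge2$ is smaller than the $j\le1$ contribution by a factor tending to $0$; this gives $\E[p^{\binom{|T_v|}{2}}]=O(\mu e^{-\mu})=O\!\big((\log n)\,n^{-(1-2\delta)(1+\epsilon)^3}\big)$. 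Choosing $\delta$ small enough that $(1-2\delta)(1+\epsilon)^3>1$ — possible precisely because $\epsilon>0$ — makes this $o(1/n)$. This is the one place where $p\ge(1+\epsilon)(\log n/n)^{1/3}$ is genuinely needed; heuristically, $ne^{-p^3n}$ is the expected number of vertices with no neighbour in a fixed set of size $\approx p^2n$, and it passes through $1$ at exactly this density. I expect this near-threshold case of \eqref{eq:as-key}, where there is no slack, to be the main obstacle; a secondary but essential point is the independence bookkeeping above, ensuring the edges from $v$ to $\Gamma'$ are independent of the event defining $\Gamma'$.

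\emph{Densities near $1$.} Since the preceding argument needs $(1-p)n^2$ to outgrow $\log n$, the range where $p$ is very close to $1$ (roughly $(1-p)n=O(\log n)$, yet still $(1-p)n^2\to\infty$) I would treat directly: a short first- and second-moment computation shows that a.a.s.\ the complement $\overline{\Gamma}$ has at least two edges, one of them isolated (sharing no vertex with any other edge of $\overline{\Gamma}$). Taking $\{w,w'\}$ to be the endpoints of such an isolated non-edge forces $\Gamma'=V(\Gamma)\setminus\{w,w'\}$, so $B(w,w')=\Gamma$ and the augmentation condition is vacuous, while the second edge of $\overline{\Gamma}$ shows $\Gamma'$ is not a clique; hence $\Gamma\in\AS$.
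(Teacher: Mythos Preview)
Your approach is correct and matches the paper's proof in all essentials. Both arguments fix a non-adjacent pair $\{w,w'\}$, show its maximal block a.a.s.\ witnesses $\AS$, and split according to how far $p$ lies above the threshold; your key estimate~\eqref{eq:as-key} is exactly the paper's computation of $\Pr(v\notin\overline{B})$ (the paper's Equation~(1)), with your ``$\mu=\Theta(\log n)$'' and ``$\mu\gg\log n$'' sub-cases corresponding to the paper's Cases~2 and~1 respectively, and your near-$1$ endgame via an isolated non-edge in $\overline{\Gamma}$ paralleling the paper's Case~3 join argument.

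Two small remarks. First, your first-moment argument to pass from a fixed pair to an adaptive one is more than is strictly needed (in the near-threshold regime $p\to0$, so a deterministically chosen pair is a.a.s.\ already a non-edge; in the large-$p$ regime your bounds hold uniformly over all pairs), but it is a clean and correct way to handle all densities at once. Second, there appears to be a slip in your description of the near-$1$ range: your main estimate works whenever $(1-p)n^2$ outgrows $\log n$, so the leftover range is $(1-p)n^2=O(\log n)$, not $(1-p)n=O(\log n)$ as written. This does not create a gap---in the genuinely leftover range one has $(1-p)n\to0$, and your isolated-edge argument goes through easily there---but the isolated-edge claim would actually fail in parts of the broader range $(1-p)n=\Theta(\log n)$ that you named.
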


\begin{proof}

    Let $\delta>0$ be a small constant to be specified later (the
    choice of $\delta$ will depend on $\epsilon$). By
    Corollary~\ref{cor:concentration_link_sizes} (i) for $p(n)$ in the range we are considering, a.a.s.\ all joint links have size at least $(1-\delta)p^2(n-2)$. Denote this event by $\mathcal{E}_1$. We henceforth condition on $\mathcal{E}_1$ occurring (not this only affects the values of probabilities by an additive factor of $\mathbb{P}(\mathcal{E}_1^c)=O(n^{-\varepsilon})=o(1)$). With probability $1-p^{\binom{n}{2}}=1-o(1)$, $\Gamma$ is not a clique, whence there there exist non-adjacent vertices in $\Gamma$. We henceforth assume $\Gamma\neq K_n$, and choose $v_1, v_2\in \Gamma$ which are not adjacent. Let $B$ be the maximal block associated with the pair $(v_1,v_2)$. We separate the range of $p$ into three.

\noindent\textbf{Case 1: $p$ is ``far'' from both the threshold and $1$.} Let $\alpha>0$ be fixed, and suppose $\alpha n^{-1/4}\leq p \leq  1- \frac{\log n}{2n}$. Let $\mathcal{E}_2$ be the event that
for every vertex $v\in\Gamma-B$ the set $\link_{\Gamma}(v)\cap B$
has size at least $\frac{1}{2}p^3(n-3)$. By
Corollary~\ref{cor:concentration_link_sizes}, a.a.s.\  event
$\mathcal{E}_2$ occurs, i.e., all vertices in $\Gamma-B$ have this
property.

We claim that a.a.s.\ there is no clique of order at least $\frac{1}{2}p^3(n-3)$ in $\Gamma$. Indeed, if $p<1-\eta$ for some fixed $\eta>0$, then by Lemma~\ref{lem:cliquesize} part (ii), a largest clique in $\Gamma$ has order $O(\log n)=o(p^3n)$. On the other hand, if $1-\eta <p\leq 1- \frac{\log n}{2n}$, then by Lemma~\ref{lem:cliquesize} part (i), a largest clique in $\Gamma$ has order $o(n)=o(p^3n)$. Thus in either case a.a.s.\ for \emph{every} $v\in
\Gamma-B$, $\link_{\Gamma}(v)\cap B$ is not a clique and
hence $v \in \overline{B}$, so that a.a.s.\ $\overline{B}=\Gamma$,
and $\Gamma \in \AS$ as required.

\noindent\textbf{Case 2:  $p$ is ``close'' to the threshold.} Suppose that $(1+\epsilon)\left(\ds\frac{\log{n}}{n}\right)^{\frac{1}{3}}\le
p(n)$ and $np^4\to 0$.
    Let $\vert B\vert=m+2$. By our conditioning, we have $(1-\delta)(n-2)p^2\leq m \leq (1+\delta)(n-2)p^2$. %Conditioning on $m\geq (1-\delta)(n-2)p^2$ (an event which occurs a.a.s.), we have that the
    The probability that a given vertex $v\in \Gamma$ is not in $\overline{B}$ is
    given by:
    \begin{equation}\label{equn:no_beam}\Pb(v\not\in\overline{B}\vert \{\vert B\vert =m\})=(1-p)^m+mp(1-p)^{m+1} +\sum_{r=2}^m \binom{m}{r}p^r(1-p)^{m-r}p^{\binom{r}{2}}.\end{equation}
    In this equation, the first two terms come from the case where $v$ is
    connected to $0$ and $1$ vertex in $B\setminus\{v_1,v_2\}$ respectively, while the third term
    comes from the case where the link of $v$ in $B\setminus\{v_1,v_2\}$ is a clique on $r\geq 2$ vertices. As we shall see, in the case $np^4\to 0$ which we are considering, the contribution from the first two terms dominates. Let us estimate their order:
    \begin{align*}(1-p)^m+mp(1-p)^{m-1} =\left(1+\frac{mp}{1-p}\right)(1-p)^m %& \le\left(1+\frac{(1-\delta)(1+\epsilon)^3\log{n}}{1-p}\right)e^{-mp}\\
    &\leq \left(1+\frac{mp}{1-p}\right)e^{-mp}\\
    &\leq \left(1+\frac{(1-\delta)(1+\epsilon)^3\log{n}}{1-p}\right)n^{-(1-\delta)(1+\epsilon)^3}.\end{align*}
    Taking $\delta<1-\frac{1}{(1+\epsilon)^3}$ this
    expression is $o(n^{-1})$.

    We now treat the sum making up the remaining terms in Equation~\ref{equn:no_beam}. To do
    so, we will analyze the quotient of successive terms in the sum.
    Fixing $2\le r\le m-1$ we see:
    $$\frac{\binom{m}{r+1}p^{r+1}(1-p)^{m-r-1}p^{\binom{r+1}{2}}}{\binom{m}{r}p^r(1-p)^{m-r}p^{\binom{r}{2}}}=\frac{m-r-1}{r+1}\cdot\frac{p^{r+1}}{1-p}\le mp^{r+1}\le mp^{3}.$$
    Since $np^4\to 0$ (by assumption), this also tends to zero as $n\to\infty$.
    The quotients of successive terms in the sum thus tend to zero uniformly as $n\to \infty$, and we may bound the sum by a geometric series:
    $$\sum_{r=2}^m \binom{m}{r}p^r(1-p)^{m-r}p^{\binom{r}{2}}
    \le\binom{m}{2}p^3(1-p)^{m-2}\sum_{i=0}^{m-2}(mp^3)^i
    \le \left(\frac{1}{2}+o(1)\right)m^2p^3(1-p)^{m-2}.$$
    Now, $m^2p^3(1-p)^{m-2}=\frac{mp^2}{1-p}\cdot mp(1-p)^{m-1}$.
    The second factor in this expression was already shown to be $o(n^{-1})$, while $mp^2\le(1+\delta)np^4 \to 0$ by assumption, so the total contribution of the sum is $o(n^{-1})$.
    Thus for any value of $m$ between $(1-\delta)p^2(n-2)$ and $(1+\delta)p^2(n-2)$, the right hand side of Equation~\ref{equn:no_beam} is $o(n^{-1})$, and we conclude:
    $$\Pb(v\notin\overline{B}\vert\mathcal{E}_1)\leq o(n^{-1}).$$ Thus, by
    Markov's inequality,
    $$\Pb(\overline{B}=\Gamma)\geq \mathbb{P}(\mathcal{E}_1)\left(1-\sum_v\Pb(v\notin\overline{B}\vert\mathcal{E}_1)\right)=1-o(1),$$
    establishing that a.a.s.\ $\Gamma\in\AS$, as claimed.

\noindent\textbf{Case 3: $p$ is ``close'' to $1$.} Suppose $n^{-2}\ll (1-p)\leq \frac{\log n}{2n}$. Consider the complement of $\Gamma$, $\Gamma^c\in \G(n, 1-p)$. In the range of the parameter $\Gamma^c$ a.a.s.\ has at least two connected components that contain at least two vertices. In particular, taking complements, we see that $\Gamma$ is a.a.s.\ a join of two subgraphs, neither of which is a clique. It is a simple exercise to see that such as graph is in $\AS$, thus a.a.s.\  $\Gamma\in\AS$.
    \end{proof}

As we now show, the bound obtained in the above theorem is actually a sharp threshold.
Analogous to the classical proof of the connectivity threshold
\cite{ErdosRenyi3}, we consider vertices which are ``isolated'' from a block to prove that
graphs below the threshold strongly fail to be in $\AS$.

\begin{thm}\label{thm:not_augsusp}If
$p\le\left(1-\epsilon\right)\left(\ds\frac{\log{n}}{n}\right)^{\frac{1}{3}}$
for some $\epsilon>0$, then $\Gamma\in\G(n,p)$ is asymptotically
almost surely not in $\AS$.\end{thm}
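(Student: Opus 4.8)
The strategy mirrors the classical first-moment argument for the connectivity threshold. We want to show that when $p \le (1-\epsilon)\left(\frac{\log n}{n}\right)^{1/3}$, a.a.s.\ $\Gamma$ fails the $\AS$ condition. The natural obstruction to membership in $\AS$ is a vertex $v$ whose link meets the core of every block in a clique (or not at all); in fact it suffices to produce, for \emph{every} candidate block $B = \{w,w'\}\star\Gamma'$, a vertex $v \notin B$ with $\link_\Gamma(v)\cap\Gamma'$ a clique. The cleanest way to do this is to find a vertex $v$ that is \emph{nearly isolated}: say, a vertex adjacent to at most one other vertex, or more robustly, a vertex $v$ together with a non-neighbour $w$ such that $v$ has at most one common neighbour with $w$. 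If such a configuration exists, then for the block structure forced by any suspension on $\{w,w'\}$, the set $\link_\Gamma(v)\cap\core(B)$ has at most one vertex, hence is a clique, so the $\AS$ property fails. So the plan is: (1) define the bad event as the existence of a vertex that is ``$AS$-isolated'' in this weak sense; (2) use a second-moment / Chernoff argument to show this event a.a.s.\ holds below the threshold.

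\textbf{Key steps.} First I would make precise the right notion of a witnessing vertex. Given the structure of Definition~\ref{defn:aj}, the cleanest sufficient condition for $\Gamma \notin \AS$ is: there is a vertex $v$ and a non-neighbour $w$ of $v$ such that $|\link_\Gamma(v)\cap\link_\Gamma(w)| \le 1$, and moreover this should hold for \emph{all} choices of block. Actually, the argument is simpler if we observe that a single vertex of degree $\le 1$ already kills $\AS$ (such a vertex cannot be joined to two non-adjacent vertices of any suspension), but degree-$\le 1$ vertices only appear at density $\sim \frac{\log n}{n}$, which is too small. So the correct object is a vertex $v$ such that for \emph{every} pair of non-adjacent vertices $\{w,w'\}$ in $\link_\Gamma(v)$... no --- rather, one wants: $v \notin B$ and $\link_\Gamma(v)\cap\core(B)$ is a clique. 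The efficient route: show a.a.s.\ there exists a vertex $v$ such that $\link_\Gamma(v)$ itself contains no pair of non-adjacent vertices that are both in a common link --- concretely, I expect the right statement is that a.a.s.\ there is a vertex $v$ whose link is a clique of size $\le 2$ (``$v$ is in no induced $4$-cycle as a spoke''), and separately a.a.s.\ $\Gamma$ has no dominating-enough block; then such a $v$ cannot lie in $\overline{B}$ for any block. Then: second, compute the expected number $X$ of such vertices. With $p \approx \left(\frac{\log n}{n}\right)^{1/3}$, the probability a fixed vertex has the bad property is governed by $\mathbb{P}(\text{link spans a clique of size} \le 2) \approx (1-p)^{n} + \ldots$; the dominant term is roughly $\binom{n}{2}p^2(1-p)^{2(n-2)}\cdot(\ldots)$, and one checks that $\mathbb{E}X \to \infty$ precisely when $p < (1-\epsilon)\left(\frac{\log n}{n}\right)^{1/3}$, because $p^3 n = (1-\epsilon)^3\log n$ and $n\cdot n^2 p^4 (1-p)^{\Theta(np^2)}$ has exponent $\sim 3\log n - np^2$ balanced at the right place. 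Third, apply the second moment method: estimate $\mathbb{E}[X^2]$ by splitting into pairs of vertices $u,v$ according to whether they are adjacent and whether their relevant neighbourhoods overlap, show $\mathbb{E}[X^2] = (1+o(1))(\mathbb{E}X)^2$, and conclude $X > 0$ a.a.s.\ by Chebyshev. Finally, combine with a union bound / the link-size concentration of Corollary~\ref{cor:concentration_link_sizes} (or a direct argument) to ensure that the witnessing vertex genuinely obstructs \emph{every} block, so that $\Gamma \notin \AS$.

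\textbf{Main obstacle.} The delicate point is pinning down exactly which local configuration at $v$ certifies $\Gamma \notin \AS$ for \emph{all} blocks simultaneously, rather than for one fixed block --- unlike connectivity, where an isolated vertex is unconditionally an obstruction, here a ``bad'' vertex for one block could conceivably be fine for another. I expect this is handled by choosing the obstruction to be strong enough to be block-independent: e.g.\ requiring that $\link_\Gamma(v)$ induces a clique (so $\link_\Gamma(v)\cap\Gamma'$ is automatically a clique for any $\Gamma'$), while $v$ itself is not in the block (which one arranges since the block has core of size $\Theta(np^2) \to \infty$ but $\link_\Gamma(v)$ is small). The second technical hurdle is the second-moment computation: the events ``$v$ has small clique-link'' for nearby $v$ are positively correlated when $u,v$ share neighbours, and one must check the correlation contribution is lower-order; this is a routine but fiddly calculation in the spirit of the $\AS$-genericity proof above, and I would carry it out by conditioning on the edge $uv$ and on $|\link(u)\cap\link(v)|$ and bounding the resulting sum by a geometric series exactly as in Case 2 of Theorem~\ref{thm:as_upper2}.
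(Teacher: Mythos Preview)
Your proposal has a genuine gap at the point you yourself flag as the ``main obstacle'': there is no single-vertex obstruction that works for all blocks simultaneously at this density, so the second-moment route cannot get off the ground.

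Concretely, you propose to find a vertex $v$ whose link is a clique (or a clique of size $\le 2$), so that $\link_\Gamma(v)\cap\Gamma'$ is automatically a clique for every block. But at $p\sim\left(\frac{\log n}{n}\right)^{1/3}$ the expected degree is $np\sim n^{2/3}(\log n)^{1/3}\to\infty$, so the probability that a given vertex has degree $\le 2$ is at most $n^2p^2(1-p)^{n-3}\le n^2\exp(-\Theta(n^{2/3}))$, and the probability that a link of typical size is a clique is $p^{\binom{np}{2}}$; in either case the expected number of such vertices tends to $0$, not to infinity. The expression $(1-p)^{\Theta(np^2)}$ appearing in your sketch is the probability that $v$ has no neighbour in a set of size $np^2$, i.e.\ in the core of \emph{one particular} block --- this is a block-dependent event, not a property of $v$ alone, and does not give a universal obstruction. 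So the random variable $X$ you want to apply the second moment method to has $\mathbb{E}X\to 0$, and the argument collapses before the variance computation.

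The paper's proof avoids this by abandoning the search for a universal bad vertex. Instead it works block by block: for a fixed non-adjacent pair $\{w,w'\}$ with block $B=B_{w,w'}$ of size $k$, the probability that \emph{every} vertex of $\Gamma\setminus B$ has a neighbour in $B$ is $(1-(1-p)^k)^{n-k}\le\exp\bigl(-(n-k)(1-p)^k\bigr)$. Conditioning on the a.a.s.\ event that $k\le(1+\delta)np^2$ (Corollary~\ref{cor:concentration_link_sizes}), this bound becomes $\exp\bigl(-n^{1-(1+\delta)(1-\epsilon)^3+o(1)}\bigr)=o(n^{-2})$ for $\delta$ small. A union bound over the $\binom{n}{2}$ blocks then shows that a.a.s.\ \emph{every} block admits a vertex with no neighbour in it (hence with $\link(v)\cap\core(B)=\emptyset$, a clique), so no block witnesses $\AS$. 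The obstruction vertex is allowed to vary from block to block; this is what makes the first-moment union bound over blocks succeed where the second-moment argument on vertices cannot.
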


\begin{proof} We will show that, for $p$ as hypothesized,
every block has a vertex ``isolated'' from it. Explicitly, let $\Gamma\in\G(n,p)$ and consider  $B=B_{v,w}=\link(v)\cap \link(w)\cup\{v,w\}$. Let $X(v,w)$ be
the event that every vertex of $\Gamma-B$ is connected by an edge to
some vertex of $B$. Clearly $\Gamma\in \AS$ only if the event $X(v,w)$ occurs for some pair of non-adjacent vertices $\{v,w\}$. Set $X=\bigcup_{\{v,w\}} X(v,w)$. Note that $X$ is a monotone event, closed under the addition of edges, so that the probability it occurs in $\Gamma\in \G(n,p)$ is a non-decreasing function of $p$. We now show that when $p=\left(1-\epsilon\right)\left(\log{n}/ n\right)^{\frac{1}{3}}$, a.a.s.\  $X$ does not occur, completing the proof.

Consider a pair of vertices $\{v,w\}$, and set $k=\vert B_{v,w}\vert$. Conditional on $B_{v,w}$ having this size and using the standard inequality $(1-x)\le e^{-x}$, we have that
$$\Pb(X(v,w))=(1-(1-p)^k)^{n-k}\le e^{-(n-k)(1-p)^k}.$$
Now, the value of $k$ is concentrated around its mean: by Corollary~\ref{cor:concentration_link_sizes},  for any fixed $\delta>0$ and all $\{v,w\}$, the order of $B_{v,w}$ is a.a.s.\ at most $(1+\delta)np^2$.
Conditioning on this event $\mathcal{E}$, we have that for any pair of vertices $v,w$,
$$\Pb(X(v,w)\vert \mathcal{E})\le \max_{k \leq (1+\delta)np^2}e^{-(n-k)(1-p)^k}=
e^{-(n-(1+\delta)np^2)(1-p)^{(1+\delta)np^2}}.$$
Now $(1-p)^{(1+\delta)np^2} = e^{(1+\delta)np^2\log(1-p)} $ and by
Taylor's theorem $\log(1-p)=-p+O(p^2)$, so that:
$$\Pb(X(v,w)\vert \mathcal{E})\leq e^{-n(1+O(p^2))e^{- (1+\delta)np^3(1+O(p))}} =e^{-n^{1-(1+\delta)(1-\epsilon)^3+o(1)}}.$$

Choosing $\delta<\ds\frac{1}{(1-\epsilon)^3}-1$, the expression above is
$o(n^{-2})$. Thus
\begin{align*}
\mathbb{P}(X)&\leq \mathbb{P}(\mathcal{E}^c)+\sum_{\{v,w\}}\mathbb{P}(X(v,w)\vert \mathcal{E})\\
&=o(1) +\binom{n}{2}o(n^{-2})=o(1).
\end{align*}
Thus a.a.s.\ the monotone event $X$ does not occur in $\Gamma\in\G(n,p)$ for $p=\left(1-\epsilon\right)\left(\log{n}/ n\right)^{\frac{1}{3}}$, and hence a.a.s. \ the property $\AS$ does not hold for $\Gamma\in\G(n,p)$ and $p(n)\leq \left(1-\epsilon\right)\left(\log{n}/ n\right)^{\frac{1}{3}}$.
\end{proof}

\section{Genericity of $\CFS$}\label{sec:improving}
The two main results in this section are upper and lower bounds for
inclusion in $\CFS$. These results are established in
Theorem~\ref{thm:CFS} and Theorem~\ref{Thm: CFS lower bound}.

\begin{thm}\label{thm:CFS}
If $p\colon\naturals\rightarrow(0,1)$ satisfies $(1-p)n^2\to\infty$ and $p(n)\geq 5\sqrt{\frac{\log
n}{n}}$ for all sufficiently large $n$, then a.a.s.\
 $\Gamma \in \G(n,p)$ lies in $\CFS$.\end{thm}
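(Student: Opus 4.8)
The plan is to split the range of $p$ by density. If $p(n)\ge(1+\epsilon')(\log n/n)^{1/3}$ for a fixed $\epsilon'>0$ then, since the standing hypothesis gives $(1-p)n^2\to\infty$, Theorem~\ref{thm:as_upper2} applies and yields $\Gamma\in\AS$ a.a.s., whence $\Gamma\in\CFS$ by Lemma~\ref{lem:AS=>CFS}. So I would reduce to the regime
\[
5\sqrt{\frac{\log n}{n}}\ \le\ p(n)\ \le\ (1+\epsilon')\big(\tfrac{\log n}{n}\big)^{1/3},
\]
in which $p\to 0$, $p$ is bounded away from $1$, and $p^2n\ge 25\log n\to\infty$. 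Here I would first condition on the a.a.s.\ event of Corollary~\ref{cor:concentration_link_sizes}(i) that every joint link $\link_\Gamma(x)\cap\link_\Gamma(y)$ has order within a factor $1\pm\delta$ of $p^2(n-2)$, and note that the expected number of vertices joined to all of $\Gamma$ is $np^{n-1}=o(1)$; so a.a.s.\ no such vertex exists, $K=\varnothing$, and it suffices to exhibit one connected component of $\square(\Gamma)$ whose $4$--cycles cover $V(\Gamma)$.

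A single ``block'' gives such a component over a small vertex set. Fixing non-adjacent $v_1,v_2$ and setting $L=\link_\Gamma(v_1)\cap\link_\Gamma(v_2)$ (so $|L|\sim p^2n$), all induced $4$--cycles $\{v_1,v_2,x,y\}$ with $x,y\in L$ non-adjacent share the diagonal $\{v_1,v_2\}$ and so lie in one component $C$; since $L$ induces a sparse graph, a union bound shows a.a.s.\ every $x\in L$ lies in such a non-edge, so $C$ covers $L\cup\{v_1,v_2\}$. If moreover $p^3n\to\infty$, then every $v\notin L\cup\{v_1,v_2\}$ has $\sim p^3n$ neighbours in $L$, which a.a.s.\ (union bound over $v$, using $\log(1/p)=\Theta(\log n)$ here) do not form a clique, so $v$ is adjacent to a non-edge $\{x,y\}$ of $L$ and $\{v,x,v_1,y\}$ (or $\{v,x,v_2,y\}$) is an induced $4$--cycle sharing the diagonal $\{x,y\}$ with $C$; thus $C$ covers $V(\Gamma)$. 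This already settles $p\gg n^{-1/3}$. For smaller $p$, however, triple link intersections are typically empty, $C$ covers only $\Theta(p^2n)=\Theta(\log n)$ vertices, and the component must instead be grown iteratively.

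For the general argument I would pass to the auxiliary graph $\mathcal{N}$ on the non-edges of $\Gamma$, with $\{x,y\}$ adjacent to $\{u,w\}$ exactly when the four edges $xu,uy,yw,wx$ are all present (equivalently, $\{x,u,y,w\}$ is an induced $4$--cycle): components of $\mathcal{N}$ are precisely components of $\square(\Gamma)$, and a vertex of $\Gamma$ is covered by a component iff it is an endpoint of one of that component's non-edges. Under the conditioning above every $\{x,y\}\in\mathcal{N}$ has $\mathcal{N}$-degree $\gtrsim\frac13(p^2n)^2\to\infty$, and a union bound rules out a.a.s.\ the common neighbourhood of any non-edge being a clique. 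Starting from a seed non-edge with well-behaved common neighbourhood, I would run a breadth-first exploration of $\mathcal{N}$: each newly reached non-edge brings in its common neighbourhood, contributing $\sim p^2n$ further covered vertices, and while the covered set has order at most $n^{1-c}$ the common neighbourhoods of distinct reached non-edges overlap negligibly, so the covered set multiplies by $\Omega(p^2n)=\Omega(\log n)$ per round. Hence after $O(\log n/\log\log n)$ rounds the component $C$ of the seed covers all but at most $n^{1-c}$ vertices of $\Gamma$.

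The last step, and the main obstacle, is the mop-up: proving that a.a.s.\ \emph{every} vertex is covered by $C$. It suffices to find, for each not-yet-covered $v$, two non-adjacent neighbours $a,c$ of $v$ with the non-edge $\{a,c\}$ lying in the component $C$; indeed $\link_\Gamma(a)\cap\link_\Gamma(c)$ then has $\sim p^2n$ vertices, of which some $b$ is non-adjacent to $v$ (as $v$ is adjacent to only a $p$-fraction of $V(\Gamma)$), so $\{v,a,b,c\}$ is an induced $4$--cycle sharing the diagonal $\{a,c\}$ with $C$. Heuristically this should be easy: a fixed non-edge $\{a,c\}$ has $\gtrsim\frac12(p^2n)^2\gg\log n$ neighbours in $\mathcal{N}$, so it ought to miss the giant component only with probability $n^{-\omega(1)}$, and a union bound over the $\binom n2$ non-edges (and over $v$) would conclude. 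The real difficulty is that the edges of $\mathcal{N}$ are strongly dependent, so this cannot be read off directly: I would instead use a two-round exposure $\Gamma=\Gamma_1\cup\Gamma_2$, build the large component $C_1\subseteq\mathcal{N}(\Gamma_1)$ from $\Gamma_1$ alone as in the previous paragraph, and then \emph{sprinkle} the edges of $\Gamma_2$ to show that, uniformly over all $v$ not covered by $C_1$, the additional randomness a.a.s.\ supplies two non-adjacent neighbours of $v$ inside a common neighbourhood of a non-edge of $C_1$, thereby absorbing $v$. Carrying out this sprinkling so that the dependence on $\Gamma_1$ is controlled and the failure probability is $o(n^{-1})$ uniformly over $v$ is where essentially all of the work lies; the slack in this step is presumably also the source of the $\sqrt{\log n}$ factor (and the constant $5$) in the hypothesis, relative to the conjectured threshold of order $n^{-1/2}$.
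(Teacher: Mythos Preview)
Your overall architecture---reduce to the window $5\sqrt{\log n/n}\le p\le O((\log n/n)^{1/3})$ via Theorem~\ref{thm:as_upper2} and Lemma~\ref{lem:AS=>CFS}, then grow a single large component of $\square(\Gamma)$ and finally absorb the remaining vertices---coincides with the paper's. The execution, however, is different, and the paper's version sidesteps the dependence obstacle you flag at the end.

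Rather than a BFS on your graph $\mathcal N$ followed by a two-round exposure, the paper grows the component as a \emph{linear chain of blocks} $B_0,B_1,\ldots,B_t$. Given a non-edge $\{x_i,y_i\}$, one takes $B_i$ to be the block on $\{x_i,y_i\}$ \emph{restricted to vertices not already in} $C_{i-1}=\bigcup_{j<i}B_j$; then one picks a non-edge $\{x_{i+1},y_{i+1}\}$ inside $\core(B_i)$ and repeats. Because each $\core(B_i)$ is carved out of fresh vertices, its size is a genuine binomial with mean at least $p^2 n/2\ge \tfrac{25}{2}\log n$, so a single Chernoff bound plus a union bound over the at most $n/(6\log n)$ steps shows every core has size at least $6\log n$ a.a.s.\ (and in particular contains a non-edge, since $\Gamma$ a.a.s.\ has no $K_{10}$). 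The chain therefore covers at least $n/2$ vertices before terminating, and all the $4$--cycles produced lie in one component $F$ of $\square(\Gamma)$. This replaces your multiplicative-growth BFS argument with something whose independence structure is transparent.

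The mop-up is where the paper's device really pays off. From the cores one extracts a family $M'$ of at least $n/5$ \emph{pairwise vertex-disjoint} non-edges, each already a diagonal of some $4$--cycle in $F$. For a vertex $w\notin C_t$, the events ``$w$ is adjacent to both endpoints of $m$'' as $m$ ranges over $M'$ are mutually independent (the pairs are disjoint and none of the edges from $w$ to $C_t$ have been exposed), each with probability $p^2$; hence $w$ misses all of them with probability at most $(1-p^2)^{n/5}\le e^{-p^2 n/5}\le n^{-5}$, and a union bound over $w$ finishes. If $\{v,v'\}\in M'\cap\core(B_i)$ succeeds for $w$, then $w$ is non-adjacent to one of $x_i,y_i$ (since $w\notin C_t$), and $\{x_i,v,v',w\}$ is an induced $4$--cycle attaching $w$ to $F$. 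This is exactly where the hypothesis $p\ge 5\sqrt{\log n/n}$ enters, and it eliminates the need for sprinkling entirely: the disjointness of the harvested diagonals manufactures the independence you were proposing to buy with a second round.

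So your plan is reasonable, but the step you label ``where essentially all of the work lies'' is simply absent from the paper's argument. The two ideas you are missing are (i) restrict each successive block to unused vertices to keep the growth phase independent, and (ii) collect vertex-disjoint diagonals from the chain so that the absorption step is an honest product of Bernoulli trials.
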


The proof of Theorem~\ref{thm:CFS} divides naturally into two ranges.
First of all for large $p$, namely for
$p(n)\geq 2\left(\log{n}/n\right)^{\frac{1}{3}}$, we appeal
to Theorem~\ref{thm:as_upper2} to show that a.a.s.\ a random graph
$\Gamma\in \G(n,p)$ is in $\AS$ and hence, by
Lemma~\ref{lem:AS=>CFS}, in $\CFS$. In light of our proof of
Theorem~\ref{thm:as_upper2}, we may think of this as the case when we
can ``beam up'' every vertex of the graph $\Gamma$ to a single block
$B_{x,y}$ in an appropriate way, and thus obtain a connected
component of $\square(\Gamma)$ whose support is all of $V(\Gamma)$

Secondly we have the case of ``small $p$'' where
\[5\sqrt{\frac{\log n}{n}} \leq p(n) \leq
2\left(\frac{\log{n}}{n}\right)^{\frac{1}{3}},\]
which is the focus of the remainder of the proof.  Here we construct a
path of length of order $n/\log n$ in $\square(\Gamma)$ onto which every vertex $v\in V(\Gamma)$ can be ``beamed up'' by adding
a $4$--cycle
whose support contains $v$.

This is done in the following manner: we start with an arbitrary pair
of non-adjacent vertices contained in a block $B_{0}$.
We then pick an arbitrary pair of
non-adjacent vertices in the block $B_0$ and let $B_1$ denote the
intersection of the block they define with $V(\Gamma)\setminus B_0$.
We repeat this procedure, to obtain a chain of blocks $B_0, B_1, B_2,
\ldots, B_t$, with  $t=O(n/\log n)$,  whose union contains a positive
proportion of $V(\Gamma)$, and which all belong to the same connected
component $C$ of $\square(\Gamma)$.  This common component $C$ is then
large enough that every remaining vertex of $V(\Gamma)$ can be
attached to it.  The main challenge is showing that our process of
recording which vertices are included in the support of a component
of the square graph does not die out or slow down too much, i.e., that the block
sizes $\vert B_i\vert$ remains relatively large at every stage of the
process and that none of the $B_i$ form a clique.

Having described our strategy, we now fill in the details, beginning
with the following upper bound on the probability of $\Gamma\in
\G(n,p)$ containing a copy of $K_{10}$, the complete graph on $10$
vertices.  The following lemma is a variant of \cite[Corollary
4]{ErdosRenyi3}:
\begin{lem}\label{lem:K10prob} Let $\Gamma\in\G(n,p)$. If
$p=o(n^{-\frac{1}{4}})$, then the probability that $\Gamma\in \G(n,p)$ contains a
clique with at least $10$ vertices is at most
$o(n^{-\frac{5}{4}})$.\end{lem}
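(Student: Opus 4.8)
The plan is a routine first-moment computation. First I would observe that if $\Gamma$ contains a clique on at least $10$ vertices, then in particular it contains an induced (equivalently, subgraph) copy of $K_{10}$; so it suffices to bound the probability that $\Gamma$ contains a copy of $K_{10}$. Let $N$ denote the number of $10$-element subsets of $V(\Gamma)$ that span a clique. Then, since each fixed set of $10$ vertices spans a clique precisely when all $\binom{10}{2}=45$ potential edges among them are present, and these events are independent, linearity of expectation gives
\[
\mathbb{E}[N] \;=\; \binom{n}{10}\, p^{45}.
\]
Next I would estimate the two factors: $\binom{n}{10}\le n^{10}$, and the hypothesis $p=o(n^{-1/4})$ yields $p^{45}=o(n^{-45/4})$. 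Hence $\mathbb{E}[N]=o\!\left(n^{10-45/4}\right)=o\!\left(n^{-5/4}\right)$. Finally, by Markov's inequality,
\[
\Pb(\Gamma \text{ contains a clique on } \ge 10 \text{ vertices}) \;=\; \Pb(N\ge 1) \;\le\; \mathbb{E}[N] \;=\; o\!\left(n^{-5/4}\right),
\]
which is the claimed bound.

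There is essentially no obstacle here: the only thing to be careful about is the exponent arithmetic, $10-45/4=-5/4$, and the reduction from "clique on at least $10$ vertices" to "contains $K_{10}$", which is immediate. (One could equally cite \cite[Corollary~4]{ErdosRenyi3} directly, but the self-contained first-moment argument above is shorter than tracking down the precise hypotheses of that statement, and it is the template we will reuse for the small-$p$ analysis in the remainder of the proof of Theorem~\ref{thm:CFS}.)
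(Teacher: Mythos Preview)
Your proposal is correct and matches the paper's proof essentially line for line: the paper also computes $\mathbb{E}[\#K_{10}]=\binom{n}{10}p^{45}\le n^{10}p^{45}=o(n^{-5/4})$ and invokes Markov's inequality. Your write-up is slightly more explicit about the reduction from ``clique of size $\ge 10$'' to ``contains $K_{10}$'', but otherwise the arguments are identical.
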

\begin{proof} The expected number of copies of $K_{10}$ in $\Gamma$ is $$\ds\binom{n}{10}p^{\binom{10}{2}}\leq n^{10}p^{45}=o(n^{-5/4}).$$ The statement of the lemma then follows from Markov's inequality.
     \end{proof}

\begin{proof}[Proof of Theorem~\ref{thm:CFS}]
    As remarked above, Theorem~\ref{thm:as_upper2} proves Theorem~\ref{thm:CFS} for ``large'' $p$, so we only need to deal with the case where
    \[5\sqrt{\frac{\log n}{n}} \leq p(n) \leq 2\left(\frac{\log{n}}{n}\right)^{\frac{1}{3}}.\]
We iteratively build a chain of blocks, as follows. Let $\{x_0,y_0\}$ be a pair of non-adjacent vertices in $\Gamma$, if such a pair exists, and an arbitrary pair of vertices if not. Let $B_0$ be the block with ends $\{x_0, y_0\}$.

Now assume we have already constructed the blocks $B_0, \ldots, B_i$,
for $i\geq 0$.  Let $C_i=\bigcup_i B_i$ (for convenience we let
$C_{-1}=\emptyset$).
We will terminate the process and set $t=i$ if any of the three
following conditions
occur: $\vert \core(B_i)\vert\leq 6\log n $ or
$i\geq n/6\log n$ or $\vert V(\Gamma)\setminus C_i\vert \leq n/2$.
Otherwise, we let $\{x_{i+1},
y_{i+1}\}$ be a pair of non-adjacent vertices in $\core(B_i)$, if such
a pair exists, and an arbitrary pair of vertices from $\core(B_i)$
otherwise.  Let $B_{i+1}$ denote the intersection of the block whose
ends are $\{x_{i+1}, y_{i+1}\}$ and the set
$\left(V(\Gamma)\setminus(C_i)\right)\cup\{x_{i+1}, y_{i+1}\}$.
Repeat.

Eventually this process must terminate, resulting in a chain of blocks
$B_0, B_1, \ldots, B_t$.  We claim that a.a.s.\ both of the following
hold for every $i$ satisfying  $0 \leq i \leq t$:
\begin{enumerate}[(i)]
    \item  $\vert\core(B_i)\vert > 6\log n$; and
    \item $\{x_i,y_i\}$ is a non-edge in $\Gamma$.
\end{enumerate}
Part (i) follows from the Chernoff bound given in
Lemma~\ref{lem:chernoff}: for each $i\geq -1$ the set
$V(\Gamma)\setminus C_i$ contains at least $n/2$ vertices by
construction.  For each vertex $v\in V(\Gamma)\setminus C_i$, let
$X_v$ be the indicator function of the event that $v$ is adjacent to
both of $\{x_{i+1}, y_{i+1}\}$.  The random variables $(X_v)$ are
independent identically distributed Bernoulli random variables with
mean $p$.  Their sum $X=\sum_v X_v$ is exactly the size of the core of
$B_{i+1}$, and its expectation is at least $p^2n/2$.  Applying
Lemma~\ref{lem:chernoff}, we get that
\begin{align*}
\mathbb{P}(X<6\log n)&\leq \mathbb{P}(X\leq\frac{1}{2}\mathbb{E}X) \\
&\leq 2e^{-\left(\frac{1}{2}\right)^2 \frac{25\log n}{6n}}=2e^{-\frac{25}{24}\log n}.
\end{align*}
Thus the probability that $\vert \core(B_i)\vert <6\log n$ for some
$i$ with  $0\leq i \leq t$ is at most:
\[t 2e^{-\frac{25}{24}\log n} \leq \frac{4n}{5\log n}2e^{-\frac{25}{24}\log n}=o(1).\]
Part (ii) is a trivial consequence of part (i) and
Lemma~\ref{lem:K10prob}: a.a.s.\ $\core(B_i)$ has size at least $6\log
n$ for every $i$ with $0 \leq i \leq t$, and a.a.s.\ $\Gamma$ contains
no clique on $10 < \log n$ vertices, so that a.a.s.\ at each stage of
the process we could choose an non-adjacent pair $\{x_i,y_i\}$.

From now on we assume that both (i) and (ii) occur, and that $\Gamma$ contains no clique of size $10$. In addition, we
assume that $\vert \core(B_0)\vert < 8n^{\frac{1}{3}}(\log{n})^{\frac{2}{3}}$, which occurs a.a.s.\ by the Chernoff
bound. Since $\core(B_i)\geq 6\log n$ for every $i$,
we must have that by time $0<t\leq n/6\log n$ the process will have
terminated
with $C_t=\bigcup_{i=0}^t B_i$ supported on at least half of
the vertices of $V(\Gamma)$.
\begin{lem}\label{lem:CCFS}
    Either one of the assumptions above fails or there exists a connected component $F$ of $\square(\Gamma)$
    such that:
    \begin{enumerate}[(i)]
        \item for every $i$ with $0 \leq i \leq t$ and every pair
        of non-adjacent vertices $\{v,v'\}\in B_i$, there is a
        vertex in $F$ whose support in $\Gamma$ contains the pair
        $\{v,v'\}$; and

        \item the support in $\Gamma$ of the $4$--cycles
        corresponding to vertices of $F$ contains all of $C_t$
        with the exception of at most $9$ vertices of
        $\core(B_0)$; moreover, these exceptional vertices are
        each adjacent to all the vertices of $\core(B_0)$.

    \end{enumerate}
\end{lem}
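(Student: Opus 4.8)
The plan is to build the component $F$ explicitly by following the chain $B_0,B_1,\dots,B_t$, at each step exhibiting enough induced $4$--cycles lying in a single component of $\square(\Gamma)$ and tracking which vertices of $C_t$ they cover. First I would set up notation: write $\core(B_i)=A_i\star D_i$, where $D_i$ is the set of vertices of $\core(B_i)$ adjacent to every other vertex of $\core(B_i)$ (so $D_i$ induces a clique) and $A_i$ is the rest. By assumption (ii) of the construction, $\core(B_i)$ contains a non-adjacent pair, so $A_i$ is nonempty and, as in the proof of Lemma~\ref{lem:AS=>CFS}, every vertex of $A_i$ has a non-neighbour in $A_i$; moreover since $\Gamma$ has no $K_{10}$, each $D_i$ has fewer than $10$ vertices, and in fact I only need $|D_i|\le 9$.

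The core of the argument is the following linking claim, proved by induction on $i$: all induced $4$--cycles of the form $\{x_j,y_j,a,a'\}$ with $0\le j\le i$ and $\{a,a'\}$ a non-adjacent pair in $A_j$, together with all $4$--cycles of the form $\{w,a,a',v\}$ where $w\in\{x_j,y_j\}$ and $v$ is any vertex adjacent to both $a$ and $a'$, lie in one component $F$ of $\square(\Gamma)$. For $i=0$ this is immediate: any two $4$--cycles through distinct non-adjacent pairs of $A_0$ share the diagonal $\{x_0,y_0\}$, and a $4$--cycle $\{w,a,a',v\}$ shares the diagonal $\{a,a'\}$ with $\{x_0,y_0,a,a'\}$. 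For the inductive step, note that the ends $\{x_{i+1},y_{i+1}\}$ were chosen as a non-adjacent pair inside $\core(B_i)$. If at least one of $x_{i+1},y_{i+1}$ lies in $A_i$, then it has a non-neighbour $a'\in A_i$, so $\{x_{i+1},y_{i+1},\cdot,\cdot\}$--cycles connect to $B_i$'s cycles through a shared diagonal inside $A_i$; if instead both $x_{i+1},y_{i+1}\in D_i$ we get a contradiction, since $D_i$ is a clique — so this case does not arise. Hence every non-adjacent pair of $A_{i+1}\subseteq\core(B_i)$ spans a $4$--cycle with $\{x_{i+1},y_{i+1}\}$ sharing a diagonal with a cycle already known to be in $F$, and likewise the "beam-up" cycles $\{w,a,a',v\}$ with $w\in\{x_{i+1},y_{i+1}\}$ attach via the diagonal $\{a,a'\}\subseteq A_{i+1}$. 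This gives part~(i) of the lemma.

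For part~(ii), I would argue that every vertex $u\in C_t$ is covered by some $4$--cycle in $F$, except possibly for a few vertices of $\core(B_0)$. Each $u\in C_t$ lies in some $\core(B_i)$ (or is an end $x_i,y_i$, which is covered by construction via the diagonal it forms). If $u\in A_i$ for some $i\ge 1$, then $u$ is adjacent to both $x_i,y_i$ and has a non-neighbour $a'\in A_i$ (if $i\ge 1$, one can take $\{u,a'\}$ or, if $u$ happens to be one of the chosen ends, use a different pair), so $\{x_i,y_i,u,a'\}$ or a beam-up cycle covers $u$; and for $u\in D_i$ with $i\ge 1$, $u$ is adjacent to both ends $x_i,y_i$ and to every vertex of $A_i$, so picking a non-adjacent pair $\{a,a'\}\subseteq A_i$ the $4$--cycle $\{x_i,a,u,a'\}$ (with $u$ replacing $y_i$) is induced and covers $u$, and shares the diagonal $\{a,a'\}$ with a cycle in $F$. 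The only vertices not handled this way are those of $\core(B_0)$ that lie in $D_0$ and are adjacent to \emph{every} vertex of $\core(B_0)$: since $|D_0|<10$, there are at most $9$ of them, and by definition each is adjacent to all of $\core(B_0)$. This is exactly the exceptional set in part~(ii).

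The main obstacle I anticipate is the bookkeeping around the $D_i$ cliques and the ends: I must be careful that at each stage a non-adjacent pair genuinely exists in $A_i$ (guaranteed by assumption (ii) plus the no-$K_{10}$ hypothesis) and that the chosen ends $\{x_{i+1},y_{i+1}\}$ really do let me glue the new block's cycles onto $F$ — the potential failure mode is both ends landing in $D_i$, which I rule out because $D_i$ is a clique while the ends are non-adjacent. Everything else is an assembly of the elementary "two $4$--cycles sharing a diagonal are adjacent in $\square(\Gamma)$" observation used repeatedly, exactly as in the proof of Lemma~\ref{lem:AS=>CFS}.
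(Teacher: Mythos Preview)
Your argument has a genuine gap in the treatment of $u\in D_i$ for $i\ge 1$. You propose the $4$--cycle $\{x_i,a,u,a'\}$, but this is \emph{not} induced: since $u\in D_i\subseteq\core(B_i)$, the vertex $u$ is adjacent to the end $x_i$, so the would-be diagonal $\{x_i,u\}$ is in fact an edge. Thus this cycle does not exist in $\square(\Gamma)$ and cannot be used to cover $u$. The same problem obstructs any attempt to cover $u\in D_i$ using only vertices of $B_i$, because $u$ is adjacent to \emph{every} vertex of $B_i$.

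The paper's proof avoids this by exploiting a feature of the construction you never use: for $i\ge 1$ one has $\core(B_i)\subseteq V(\Gamma)\setminus C_{i-1}$, so any $v\in\core(B_i)$ lies outside $B_{i-1}$ and hence fails to be adjacent to at least one of $x_{i-1},y_{i-1}$, say $x_{i-1}$. Then $\{x_{i-1},x_i,y_i,v\}$ is an induced $4$--cycle (the diagonals $\{x_{i-1},v\}$ and $\{x_i,y_i\}$ are both non-edges) which shares the diagonal $\{x_i,y_i\}$ with cycles already in $F$. This single observation covers every vertex of $\core(B_i)$ for $i\ge 1$ uniformly, with no need for the $A_i\star D_i$ decomposition at all; that decomposition is only relevant for $\core(B_0)$, where there is no previous block to fall back on, and this is precisely where the exceptional set of at most $9$ vertices arises. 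Once you adopt this viewpoint, the whole proof of (ii) collapses to two lines, and your inductive linking claim for (i) becomes the straightforward chaining $\{x_i,y_i,x_{i+1},y_{i+1}\}\in F$ that the paper uses.
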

\begin{proof}
By assumption the ends $\{x_0, y_0\}$ of $B_0$ are non-adjacent.
Thus, every pair of non-adjacent vertices $\{v,v'\}$ in $\core(B_0)$
induces a $4$--cycle in $\Gamma$ when taken together with $\{x_0,y_0\}$,
and all of these squares clearly lie in the same component $F$ of
$\square(\Gamma)$.  Repeating the argument with the non-adjacent pair
$\{x_1, y_1\}\in \core(B_0)$ and the block $B_1$, and then the
non-adjacent pair $\{x_2, y_2\}\in \core(B_1)$ and the block $B_2$,
and so on, we see that there is a connected component $F$ in
$\square(\Gamma)$ such that for every $0\leq i \leq t$, every
pair of non adjacent vertices $\{v,v'\} \in B_i$ lies in a $4$--cycle
corresponding to a vertex of $F$.  This establishes (i).

We now show that the support of $F$ contains all of $C_t$
except possibly some vertices in $B_0$.  We already established
that every pair $\{x_i,y_i\}$ is in the support of some vertex of $F$.
Suppose $v\in \core(B_i)$ for some $i>0$.  By construction,
$v$ is not adjacent to at least one of $\{x_{i-1}, y_{i-1}\}$,
say $x_{i-1}$.  Thus, $\{x_{i-1},x_i, y_i, v\}$ induces a $4$--cycle
which contains $v$ and is
associated to a vertex of $F$.  Finally, suppose $v\in \core(B_0)$.
By (i), $v$ fails to be in the support of $F$ only if $v$ is adjacent
to all other vertices of $\core(B_0)$.  Since, by
assumption, $\Gamma$ does not contain any clique of size $10$, there
are at most $9$ vertices not in the support of $F$, proving (ii).
\end{proof}

Lemma~\ref{lem:CCFS}, shows that a.a.s.\ we have a ``large'' component
$F$ in $\square(\Gamma)$ whose support contains ``many'' pairs of non-adjacent vertices.
In the last part of the proof, we use these pairs to prove that the
remaining vertices of $V(\Gamma)$ are also
supported on our connected component.

For each $i$ satisfying $0 \leq i \leq t$, consider a
a maximal collection, $M_{i}$,
of pairwise-disjoint pairs of vertices in $\core(B_i)\setminus\{x_{i+1},y_{i+1}\}$.  Set $M=\bigcup_i M_i$, and let
$M'$ be the subset of $M$ consisting of pairs, $\{v,v'\}$, for which
$v$ and $v'$ are not adjacent in $\Gamma$.  We have
\begin{align*}
\vert M \vert = \sum_{i=1}^{t} \left(\left\lfloor\frac{1}{2} \vert \core(B_i)\vert\right\rfloor -1\right)\geq \frac{\vert C_t \vert}{2} -2t\geq \frac{n}{4}(1-o(1)).
\end{align*}
The expected size of $M'$ is thus $(1-p)n(\frac{1}{4}-o(1))=\frac{n}{4}(1-o(1))$, and by the Chernoff bound from Lemma~\ref{lem:chernoff} we have
\begin{align*}
\mathbb{P}(\vert M'\vert \leq \frac{n}{5})&\leq 2e^{-\left(\frac{1}{5}+o(1)\right)^2 \frac{(1-p)n}{12}}\leq e^{-\left(\frac{1}{300}+o(1)\right)n},
\end{align*}
which is $o(1)$. Thus a.a.s.\ $M'$ contains at least $n/5$ pairs, and
by Lemma~\ref{lem:CCFS} each of these lies in some $4$--cycle of $F$.
We now show that we can ``beam up'' every vertex not yet supported on
$F$ by a $4$--cycle using a pair from $M'$. By construction we have
at most $n/2$ unsupported vertices from $V(\Gamma)\setminus C_t$ and
at most $9$ unsupported vertices from $\core(B_0)$.

Assume that $\vert M' \vert \geq n/5$. Fix a vertex $w\in
V(\Gamma)\setminus C_t$. For each pair $\{v,v'\}\in M'$, let
$X_{v,v'}$ be the event that $w$ is adjacent to both $v$ and $v'$.
We now observe that if $X_{v,v'}$ occurs for some pair $\{v,v'\}\in M'\cap
\core(B_i)$, then $w$ is supported on $F$.
By construction, $w$ is not
adjacent to at least one of $\{x_i, y_i\}$, let us
say without loss of generality $x_i$. Hence, $\{x_i,v,v',w\}$ is an
induced
$4$--cycle in $\Gamma$ which contains $w$ and which
corresponds to a vertex of $F$.

The probability that $X_{v,v'}$ fails to happen for every pair $\{v,v'\}\in M'$ is exactly
\begin{align*}
(1-p^2)^{\vert M' \vert}\leq (1-p^2)^{n/5}\leq e^{-\frac{p^2n}{5}}=e^{-5 \log n}.
\end{align*}
Thus the expected number of vertices $w \in V(\Gamma)\setminus C_t$
which fail to be in the support of  $F$ is at most $\frac{n}{2}e^{-5\log n}=o(1)$, whence by Markov's inequality a.a.s.\ no such bad vertex $w$ exists.

Finally, we deal with the possible $9$ left-over vertices $b_1, b_2,
\ldots b_9$ from $\core(B_0)$ we have not yet supported.  We observe
that since $\core(B_0)$ contains at most $8n^{\frac{1}{3}}(\log{n})^{\frac{2}{3}}$ vertices
%\jason{Why is this 40?} %VFR: assumption on core(B_0) before Lemma 4.3
(as we are assuming and as occurs a.a.s.\ , see the discussion before Lemma~\ref{lem:CCFS} ), we do not stop the process with
$B_0$, $\core(B_1)$ is non-empty and contains at least $6\log n$
vertices. As stated in Lemma~\ref{lem:CCFS}, each unsupported vertex
$b_i$ is adjacent to all other vertices in $\core(B_0)$, and in
particular to both of $\{x_1, y_1\}$. If $b_i$ fails to be adjacent
to some vertex $v\in \core(B_1)$, then the set $\{b_i, x_1, y_1, v\}$
induces a $4$--cycle corresponding to a vertex of $F$ and whose
support contains $b_i$. The probability that there is some $b_i$ not
supported in this way is at most
\begin{align*}
9 \mathbb{P}(b_i \textrm{ adjacent to all of } \core(B_1))&=9 p^{6\log n}=o(1).
\end{align*}
Thus a.a.s.\ we can ``beam up'' each of the vertices $b_1, \ldots
b_9$ to $F$ using a vertex $v\in \core(B_1)$, and the support of the
component $F$ in the square graph $\square(\Gamma)$ contains all vertices of $\Gamma$. This shows that a.a.s.\ $\Gamma \in \CFS$, and concludes the proof of the theorem.
\end{proof}

\begin{rem} The constant $5$ in Theorem~\ref{thm:CFS} is not optimal,
and indeed it is not hard to improve on it slightly, albeit at the
expense of some tedious calculations.  We do not try to obtain a
better constant, as we believe that the order of the upper bound we
have obtained is not sharp.  We conjecture that the actual threshold
for $\CFS$ occurs when $p(n)$ is of order $n^{-1/2}$ (see the
discussion below Theorem~\ref{Thm: CFS lower bound}), but a proof of
this is likely to require significantly more involved and
sophisticated arguments than the present paper.
\end{rem}

A simple lower bound for the emergence of the $\CFS$ property can be
obtained from the fact that if $\Gamma\in\CFS$, then $\Gamma$ must
contain at least $n-3$ squares; if $p(n)\ll n^{-\frac{3}{4}}$, then by
Markov's inequality a.a.s.\ a graph in $\G(n,p)$ contains fewer than
$o(n)$ squares and thus cannot be in $\CFS$.  Below, in
Theorem~\ref{Thm: CFS lower bound}, we prove a
better lower bound, showing that the order of the upper bound
we proved in Theorem~\ref{thm:CFS} is not off by a factor of more than
$(\log n)^{3/2}$.

\begin{lem}\label{lem:order} Let $\Gamma$ be a graph and let
$C$ be the subgraph of $\Gamma$ supported on a given connected component of $\square(\Gamma)$.
Then there exists an ordering $v_1<v_2< \cdots <v_{\vert
C\vert}$ of the vertices of $C$ such that for all $i\ge3$,
$v_i$ is adjacent in $\Gamma$ to at least two vertices
preceding it in the order.
\end{lem}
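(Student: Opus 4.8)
The plan is to build the ordering greedily by sweeping through the induced $4$--cycles of the given component of $\square(\Gamma)$ in an order compatible with its connectivity. First I would fix an enumeration $F_1,F_2,\ldots,F_m$ of the vertices of that component (i.e.\ of the induced $4$--cycles comprising it) with the property that for every $j\ge2$ the cycle $F_j$ is joined in $\square(\Gamma)$ to some $F_k$ with $k<j$; such an enumeration exists because the component is connected --- take any spanning tree of it and list its vertices in breadth-first order. By Definition~\ref{defn:CFS}, $C$ is the subgraph of $\Gamma$ induced on $\bigcup_{j=1}^m V(F_j)$, so producing an ordering of $\bigcup_j V(F_j)$ is exactly producing an ordering of $V(C)$, and note $\vert C\vert\ge 4$.

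Next I would initialise the order using $F_1$. Since $F_1$ is an induced $4$--cycle it has a diagonal $\{a,c\}$ (a non-adjacent pair of $\Gamma$), and its remaining two vertices $b,d$ are each adjacent in $\Gamma$ to both $a$ and $c$. Setting $v_1=a$, $v_2=c$, $v_3=b$, $v_4=d$, the vertices $v_3$ and $v_4$ are each adjacent to the two preceding vertices $v_1,v_2$, so the required property holds on the part of the order placed so far.

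Then I would process $F_2,\ldots,F_m$ in turn, maintaining the invariant that at stage $\ell$ the vertices placed so far are exactly $\bigcup_{j\le\ell}V(F_j)$ and that every placed vertex from the third onward is $\Gamma$--adjacent to at least two earlier placed vertices. When processing $F_j$: by the choice of enumeration, $F_j$ meets some $F_k$ with $k<j$ in a diagonal $\{x,y\}$, and since $\{x,y\}\subseteq V(F_k)$ both $x$ and $y$ have already been placed. The two remaining vertices $z,w$ of $F_j$ are, being the off-diagonal vertices of the $4$--cycle $F_j$ relative to its diagonal $\{x,y\}$, each adjacent in $\Gamma$ to both $x$ and $y$. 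I append to the end of the current order those of $z,w$ not already placed (in either order); each newly appended vertex then has among its predecessors the two vertices $x,y$, both adjacent to it, so the invariant is preserved. After $F_m$ has been processed every vertex of $C$ has been placed exactly once and the adjacency condition holds for all $i\ge3$, which is the assertion.

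The only points requiring care are the existence of the connectivity-compatible enumeration $F_1,\ldots,F_m$ (a standard spanning-tree argument) and the bookkeeping that a vertex lying in several $4$--cycles is placed only once; the geometric input --- that the off-diagonal vertices of an induced $4$--cycle are adjacent to both endpoints of the diagonal --- is immediate from the definitions. I therefore expect the write-up to be short, with the spanning-tree ordering of the component being the main, though still routine, ingredient.
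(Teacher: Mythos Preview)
Your proposal is correct and follows essentially the same approach as the paper: both arguments initialise the order with the two diagonal vertices of a first $4$--cycle, then exploit connectivity of the component in $\square(\Gamma)$ to feed in the remaining $4$--cycles one at a time, appending each new vertex as one adjacent to both endpoints of a previously-placed diagonal. Your version is in fact somewhat more explicit, fixing a spanning-tree enumeration $F_1,\ldots,F_m$ up front, whereas the paper's proof describes the same induction more informally.
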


\begin{proof}
As $C$ is a component of $\square(\Gamma)$, it contains at
least one induced $4$--cycle.  Let $v_1, v_2$ be a pair of
non-adjacent vertices from such an induced $4$--cycle.  Then
the two other vertices $\{v_3,v_4\}$ of the $4$--cycle are
both adjacent in $\Gamma$ to both of $v_{1}$ and $v_{2}$. If this
is all of $C$, then we are done. Otherwise, we know that each
$4$--cycle in $C$ is ``connected'' to the cycle $F=
\{v_{1},v_{2},v_{3},v_{4}\}$ via a sequence of induced
$4$--cycles pairwise intersecting in pairwise non-adjacent
vertices. In particular, there is some such $4$--cycle whose
intersection with $F$ is either a pair of non-adjacent vertices
in $F$ or three vertices of $F$;
either way, we may add the new vertex next in the
order.

Continuing in this way and using the fact that the number of
vertices not yet reached is a monotonically decreasing set of
positive integers,  the lemma follows.
\end{proof}

\begin{prop}\label{prop: no giant in square(Gamma)}
Let $\delta>0$. Suppose $p\leq\frac{1}{\sqrt{n}\log n}$.  Then a.a.s.\ for
$\Gamma\in\G(n,p)$, no component of $\square(\Gamma)$ has support containing
more than $4\log n$ vertices of $\Gamma$.
\end{prop}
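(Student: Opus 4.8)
The plan is a first-moment argument: we use Lemma~\ref{lem:order} to pull a rigid skeleton out of a large component of $\square(\Gamma)$ and then show such skeletons are vanishingly rare. (The hypothesis $\delta>0$ plays no role.)

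\emph{Reduction to a chain of squares.} Set $k:=\lfloor 4\log n\rfloor+1$, so that a component of $\square(\Gamma)$ has support on more than $4\log n$ vertices precisely when that support has at least $k$ vertices. Suppose this happens. By Lemma~\ref{lem:order} the support admits an ordering in which each vertex past the second has two earlier neighbours, and, tracing the proof of that lemma, this ordering arises by chaining induced $4$--cycles along shared diagonals. Carrying this out greedily and discarding any $4$--cycle that adds no new vertex, we obtain induced $4$--cycles $F_0,F_1,\dots,F_t$, all lying in one component of $\square(\Gamma)$, with: $F_j$ shares a diagonal with some $F_i$, $i<j$, for every $j\ge1$; $V(F_j)\not\subseteq\bigcup_{i<j}V(F_i)$ for every $j\ge1$; and $\bigl|\bigcup_{j\le t}V(F_j)\bigr|\ge k$. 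Since $F_0$ covers four vertices and each subsequent $F_j$ covers one or two new ones, $\tfrac{k-4}{2}\le t\le k-4$; in particular $t=\Theta(\log n)\to\infty$. It thus suffices to show that the expected number $N$ of such chains tends to $0$.

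\emph{The count.} Build the chain step by step, exposing at each step only the edges of $\Gamma$ incident to vertices just introduced, so that the edge-events examined at distinct steps lie on disjoint pairs and their probabilities multiply. The starting square $F_0$ contributes at most $3\binom n4 p^4\le n^4p^4\le n^2(\log n)^{-4}$, using $p\le(\sqrt n\log n)^{-1}$. For each $j\ge1$ we first choose the earlier square $F_i$ and one of its two diagonals $\{a,b\}$ (at most $2t$ options) and then the remaining two vertices of $F_j$, each joined to both $a$ and $b$: if both are new this contributes at most $n^2p^4\le(\log n)^{-4}$, while if only one is new this contributes at most $np^2\le(\log n)^{-2}$ (choice of the new vertex times its two new edges to $a,b$) multiplied by the number of possibilities for the reused vertex. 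The engine of the argument is the inequality $np^2\le(\log n)^{-2}=o(1)$: a square with a prescribed diagonal $\{a,b\}$ demands two common neighbours of the non-adjacent pair $\{a,b\}$, and a pair of vertices of $\G(n,p)$ has expected common neighbourhood of size only $np^2=O((\log n)^{-2})$, so each square after $F_0$ is genuinely costly to produce. Collecting the contributions, each of the $t$ steps is bounded by $\mathrm{poly}(\log n)\cdot(\log n)^{-2}$, whence $\E[N]\le n^2(\log n)^{-4}\cdot\bigl(\mathrm{poly}(\log n)\cdot(\log n)^{-2}\bigr)^{t}$. With $t=(2+o(1))\log n$ this is $n^2\exp\bigl(-(4+o(1))\log n\,\log\log n\bigr)=n^{2-(4+o(1))\log\log n}=o(1)$, and Markov's inequality finishes the proof.

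\emph{Main obstacle.} The delicate point is the factor for the ``reused vertex'' in the one-new-vertex case: taken crudely it is the number of vertices discovered so far, namely $\Theta(\log n)$, and $(\Theta(\log n))^{t}=n^{\Theta(\log\log n)}$ is by itself too large to be absorbed. One must therefore bound this case more honestly --- for instance by additionally exposing the two edges from the reused vertex to $\{a,b\}$ (a gain of $p^2$ whenever those edges had not already been revealed), or by reorganising the exploration so that when a diagonal is processed one harvests \emph{all} of its common neighbours at once and a previously-seen common neighbour is merely re-encountered, or by first conditioning on the a.a.s.\ event that no pair of non-adjacent vertices has many common neighbours. Handling this bookkeeping, rather than the size estimates, is where the real work lies.
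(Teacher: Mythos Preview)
Your proposal is honest about its own gap, and that gap is real: in the one-new-vertex step you must pay at most $2t\cdot O(\log n)\cdot np^2=O(1)$ per step, not $o(1)$, and $O(1)^t$ does not beat the $n^2$ from $F_0$. None of your three suggested fixes obviously closes this: the ``extra $p^2$'' is unavailable precisely when the edges were already exposed; the ``harvest all common neighbours at once'' reorganisation still has to account for the same reused-vertex combinatorics; and conditioning on sparse common neighbourhoods replaces the $O(\log n)$ reused-vertex factor by $O(1)$, which together with the $2t$ factor still leaves $O(\log n)\cdot np^2=O((\log n)^{-1})$ per step --- closer, but now the constants matter and your overcounting of chains (choice of $t$, choice of parent square, etc.) eats into that margin. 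There is also a smaller issue: $t$ is not $(2+o(1))\log n$ but ranges over $[\tfrac{k-4}{2},\,k-4]$, and you must sum over all $t$ and over the pattern of one-new versus two-new steps, not fix a single value.

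The paper sidesteps the whole difficulty by using Lemma~\ref{lem:order} at face value rather than tracing the chain of squares behind it. It simply counts ordered $m$-tuples $(v_1,\dots,v_m)$ such that each $v_i$ with $i\ge3$ has at least two neighbours among $v_1,\dots,v_{i-1}$. The crucial observation is that the events $A_i=\{v_i\text{ has }\ge2\text{ earlier neighbours}\}$ are determined by \emph{disjoint} edge-sets and hence are genuinely independent, with $\Pr(A_i)\le\binom{i-1}{2}p^2(1+o(1))$. Multiplying and summing over ordered $m$-tuples gives an expected count governed by $\bigl(nm^2p^2/(2e^2)\bigr)^m$, and the paper shows this base is strictly below $1$ by taking $m=\lceil\min(4\log n,\,4\log(1/p))\rceil$ and splitting into cases. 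No reused-vertex bookkeeping, no exposure order, no chain structure --- the square-graph geometry is used only to produce the ordering, after which it is discarded. The resulting bound is already tight enough that the constants matter (at $p=n^{-1/2}(\log n)^{-1}$ one gets $nm^2p^2\approx4$, saved by the $2e^2$), so an approach like yours with additional polylogarithmic overcounting factors per step would likely not recover the full range $p\le n^{-1/2}(\log n)^{-1}$ even if the gap were patched.
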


\begin{proof} Let $\delta>0$.
Let $m=\left\lceil \min\left(4\log n, 4\log
\left(\frac{1}{p}\right) \right)\right\rceil$, with $p\leq 1/\left(\sqrt{n}\log n\right)$.
We shall show that a.a.s.\ there is
no ordered $m$--tuple of vertices $v_1<v_2<\cdots <v_m$ from
$\Gamma$ such that for every $i\ge 2$ each vertex $v_i$ is
adjacent to at
least two vertices from $\{v_j: \ 1\leq j <i\}$.
By Lemma~\ref{lem:order}, this is enough to establish our claim.

Let $v_1<v_2<\cdots < v_m$ be an arbitrary ordered $m$--tuple
of vertices from $V(\Gamma)$.  For $i\geq 2$, let $A_i$ be the
event that $v_i$ is adjacent to at least two vertices in the
set
$\{v_j: \ 1\leq j <i\}$.  We have:
\begin{equation}\label{equn:prob(A_i)}
\Pr(A_i)= \sum_{j=2}^{i-1}\binom{i-1}{j}p^j(1-p)^{i-j-1}.
\end{equation}
As in the proof of Theorem~\ref{thm:not_augsusp} we consider the
quotients of successive terms in the sum to show that its
order is given by the term $j=2$. To see this, observe:
$$\frac{\binom{i-1}{j+1}p^{j+1}(1-p)^{i-j-2}}{\binom{i-1}{j}p^j(1-p)^{i-j-1}}
\le\frac{i-j-1}{j+1}\cdot \frac{p}{1-p}< mp$$
where the final inequality holds for $n$ sufficiently large and $p=p(n)$ satisfying our assumption.
Since $m=O\left(\log n\right)$ and $p=o(n^{-1/2})$ we have, again for $n$ large enough, that $mp=o(1)$,
and we may bound the sum in equation (\ref{equn:prob(A_i)}) by a geometric series to obtain the bound:
\begin{align*}
\Pr(A_i)&= \binom{i-1}{2}p^2(1-p)^{i-3}(1+O(mp))\\
&\leq \frac{(i-1)^2}{2} p^2(1+O(mp)).
\end{align*}
Now let $A=\bigcap_{i=1}^m A_i$. Note that the events
$A_i$ are mutually independent, since they are determined by disjoint edge-sets.
%Observe $A$ is a monotone increasing property (it is only likelier to hold if we have more edges) with respect to edges.
Thus we have:
\begin{align*}
\Pr(A)=\prod_{i=3}^{m} \Pr(A_i)&\leq \prod_{i=3}^m \left(\frac{(i-1)^2}{2}p^2(1+O(mp))\right)\\
&= \frac{((m-1)!)^2p^{2m-4} }{2^{m-2}}(1+O(m^2p)),
\end{align*}
where in the last line we used the equality
$(1+O(mp))^{m-2}
%=\sum_{i=0}^{m-2}\binom{m-2}{i}(mp)^i %
=1+O(m^2p)$ to bound the
error term.  Thus we have that the expected number $X$
of ordered $m$--tuples of vertices from $\Gamma$ for
which $A$ holds is at most:
\begin{align*}
\mathbb{E}(X)=\frac{n!}{(n-m)!}\Pb(A)&\leq n^{m} 4e^2\left(\frac{m^2p^{2-\frac{4}{m}}}{e^22}\right)^m (1+O(m^2p))\\
&=4e^2\left(\frac{nm^2p^{2-4/m}}{2e^2}\right)^m (1+O(m^2p)),
\end{align*}
where in the first line we used the inequality $(m-1)!\leq e (m/e)^{m}$.
We now consider the quantity
\[f(n,m,p)= \frac{nm^2 p^{2-4/m}}{2e^2}\]
which is raised to the $m^{\textrm{th}}$ power in the inequality above. We claim that $f(n,m,p)\leq e^{-1+\log 2+o(1)}$. We have two cases to consider:\\

\noindent\textbf{Case 1: $m=\lceil 4\log n\rceil $.} Since $4\log n \leq 4\log(1/p)$, we deduce that $p\leq n^{-1}$. Then
\begin{align*}
f(n,m,p)&= \frac{n (4\log n)^2 p^{2-o(1)}}{2e^2}\leq n^{-1+o(1)}\leq e^{-1+\log 2 +o(1)}.
\end{align*}
\\
\noindent\textbf{Case 2: $m=\lceil 4\log(1/p)\rceil $.}
First, note that
$p^{-4/m}=\exp\left(\frac{4\log(1/p)}{\lceil 4 \log 1/p\rceil}\right)\leq e$.
Also, for $p$ in the range $[0, n^{-1/2}(\log
n)^{-1}]$ and $n$ large enough, $p^2(\log(1/p))^2$ is
an increasing function of $p$ and is thus at most:
$$\frac{1}{n(\log n)^2} \left(\frac{1}{2}\log
n\right)^2\left(1+O\left(\frac{\log \log n}{\log
n}\right)\right)= \frac{1}{4} n^{-1} (1+o(1)).$$
Plugging this into the expression for $f(n,m,p)$, we
obtain:
\begin{align*}
f(n,m,p)&= (1+o(1))\frac{16n (\log (1/p))^2p^{2-4/m}}{2e^2}\\
&\leq (1+o(1))\frac{2}{e}= e^{-1+\log 2+o(1)} .
\end{align*}

Thus, in both cases (1) and (2) we have $f(n,m,p)\leq
e^{-1 +\log 2+o(1)}$, as claimed, whence
\begin{align*}
\mathbb{E}(X)&\leq 4e^2(f(n,m,p))^m(1+O(m^2p)) \leq  4e^2 e^{-(1-\log 2)m +o(m)}(1+O(m^2p))=o(1).
\end{align*}
It follows from Markov's inequality that the
non-negative, integer-valued random variable $X$ is
a.a.s.\ equal to $0$.  In other words, a.a.s.\ there
is no ordered $m$--tuple of vertices in $\Gamma$ for
which the event $A$ holds and, hence by
Lemma~\ref{lem:order}, no component in
$\square(\Gamma)$ covering more than $m\leq 4\log n$
vertices of $\Gamma$.
\end{proof}

\begin{thm}\label{Thm: CFS lower bound} Suppose
$p\leq\frac{1}{\sqrt{n} \log n}$.  Then a.a.s.\
$\Gamma\in\G(n,p)$ is not in $\CFS$.\end{thm}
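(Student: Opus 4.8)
The plan is to derive Theorem~\ref{Thm: CFS lower bound} as a short consequence of Proposition~\ref{prop: no giant in square(Gamma)} together with the absence of large cliques at this density. Suppose $\Gamma\in\CFS$. By Definition~\ref{defn:CFS} we may write $\Gamma=\Gamma'\star K$, where $K$ is a (possibly empty) clique and $\Gamma'$ is a non-empty induced subgraph such that some connected component $C$ of $\square(\Gamma')$ has the property that the union of the $4$--cycles corresponding to vertices of $C$ covers all of $V(\Gamma')$. The first step is to relate $\square(\Gamma')$ to $\square(\Gamma)$: since $\Gamma'$ is an induced subgraph of $\Gamma$, every induced $4$--cycle of $\Gamma'$ is also an induced $4$--cycle of $\Gamma$, and any two such $4$--cycles meeting in a diagonal of $\Gamma'$ also meet in a diagonal of $\Gamma$; hence $C$ embeds into a single connected component of $\square(\Gamma)$, whose support in $\Gamma$ therefore contains all $\lvert V(\Gamma')\rvert$ vertices of $\Gamma'$.

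Next I would invoke Proposition~\ref{prop: no giant in square(Gamma)}: since $p\le \frac{1}{\sqrt n\log n}$, a.a.s.\ no component of $\square(\Gamma)$ has support exceeding $4\log n$ vertices, so on this event $\lvert V(\Gamma')\rvert\le 4\log n$. Then, noting that $p\le \frac{1}{\sqrt n\log n}=o(n^{-1/4})$, Lemma~\ref{lem:K10prob} shows that a.a.s.\ $\Gamma$ contains no copy of $K_{10}$; in particular the clique $K$ has at most $9$ vertices. Since in a join the two factors have disjoint vertex sets, on the intersection of these two a.a.s.\ events we get $n=\lvert V(\Gamma)\rvert=\lvert V(\Gamma')\rvert+\lvert V(K)\rvert\le 4\log n+9$, which fails for all sufficiently large $n$. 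Hence a.a.s.\ $\Gamma\notin\CFS$.

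There is essentially no serious obstacle at this stage: the substantive work is already packaged inside Proposition~\ref{prop: no giant in square(Gamma)} and, behind it, the ordering Lemma~\ref{lem:order}. The only points requiring a little care are the passage from $\square(\Gamma')$ to $\square(\Gamma)$ described above, and the degenerate case in which $\Gamma$ is itself a clique (forcing $\Gamma'$ empty) — but such a graph is already excluded from $\CFS$ by Definition~\ref{defn:CFS}, and in any event a.a.s.\ $\Gamma$ is not complete at this density. I therefore expect the proof to be only a few lines long.
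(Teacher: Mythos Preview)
Your proof is correct and follows essentially the same route as the paper: both derive the theorem as an immediate consequence of Proposition~\ref{prop: no giant in square(Gamma)}, with the only real work being to dispose of the clique factor $K$ in the decomposition $\Gamma=\Gamma'\star K$. The one minor difference is that the paper handles $K$ by observing that a.a.s.\ no vertex of $\Gamma$ has degree exceeding $\sqrt{n}$ (a Chernoff bound), so $K$ must be empty and $\Gamma'=\Gamma$; you instead bound $\lvert K\rvert\le 9$ via Lemma~\ref{lem:K10prob} and then need the (correct) observation that $\square(\Gamma')$ embeds in $\square(\Gamma)$. The paper's version is marginally cleaner since it avoids the embedding step, but your argument works just as well.
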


\begin{proof}
To show that $\Gamma\not\in\CFS$, we first show that, for
$p\leq \frac{1}{\sqrt{n} \log n}$, a.a.s.\ there is no
non-empty clique $K$ such that $\Gamma=\Gamma' \star
K$.  Indeed the standard Chernoff bound guarantees that
we have a.a.s.\ no vertex in $\Gamma$ with degree
greater than $\sqrt{n}$.  Thus to prove the theorem,
it is enough to show that a.a.s.\ there is no
connected component $C$ in $\square(\Gamma)$
containing
all the vertices in $\Gamma$.  Theorem~\ref{prop: no
giant in square(Gamma)} does this by establishing the
stronger bound that a.a.s.\ there is no connected component $C$
covering more than $4\log n$ vertices.
\end{proof}
While Theorem~\ref{Thm: CFS lower bound} improves on the trivial lower bound of $n^{-3/4}$, it is still off from the upper bound for the emergence of the $\CFS$ property established in Theorem~\ref{thm:CFS}. It is a natural question to ask where the correct threshold is located.

\begin{rem}
    We strongly believe that there is a sharp threshold for the $\CFS$
    property analogous to the one we established for the $\AS$ property.
    What is more, we believe this threshold should essentially coincide
    with the threshold for the emergence of a giant component in the
    auxiliary square graph $\square(\Gamma)$.  Indeed, our arguments in
    Proposition~\ref{prop: no giant in square(Gamma)} and
    Theorems~\ref{thm:CFS} both focus on bounding the growth of a
    component in $\square(\Gamma)$.  Heuristically, we would expect a
    giant component to emerge in $\square(\Gamma)$ to emerge for
    $p(n)=cn^{-1/2}$, for some constant $c>0$, when the expected number of
    common neighbors of a pair of non-adjacent vertices in $\Gamma$ is
    $c^2$, and thus the expected number of distinct vertices in
    $4$--cycles which
    meet a fixed $4$--cycle in a non-edge is $2c^2$.  What the
    precise value of $c$ should be
    is not entirely clear (a branching process heuristics suggests $\sqrt{\ds\frac{\sqrt{17}-3}{2}}$ as a possible value, see Remark~\ref{rem: CFS speculation}), however, and the dependencies in the
    square graph make its determination a delicate matter.
\end{rem}

\section{Experiments}\label{sec:experiments}
Theorem~\ref{thm:as_upper2} and Theorem~\ref{thm:not_augsusp} show
that
$\left(\log n/n\right)^{\frac{1}{3}}$ is a sharp
threshold for the family $\AS$ and Theorem~\ref{thm:CFS} shows
that $n^{-\frac{1}{2}}$ is the
right order of magnitude of the threshold for $\CFS$.  Below we provide some empirical
results on the behaviour of random
graphs near the threshold for $\AS$ and the conjectured threshold for
$\CFS$. We also compare our experimental data with
analogous data at the connectivity threshold.  Our experiments are based on various algorithms that we implemented in $\texttt{C++}$; the source
code is available from the authors\footnote{All source code and data at
\url{www.wescac.net/research.html} or \url{math.columbia.edu/~jason} .}.

We begin with the observation that computer simulations
of $\AS$ and $\CFS$ are tractable.
% \iffalse
% \begin{prop}\label{prop:algorithm}
% There exists a polynomial-time algorithm that determines, given a finite graph $\Gamma$, whether or not $\Gamma\in\AS$.
% \end{prop}
%
% \begin{proof}
% Let $n$ be the number of vertices in $\Gamma$.  For each of the
% $\binom{n}{2}$ pairs $x,y$ of distinct vertices, construct the block
% $B=B(x,y)$ as follows: for each of the $n-2$ vertices $v$ not in
% $\{x,y\}$, check whether $v$ is adjacent to both $x$ and $y$; the
% block contains $v$ if and only if there are edges connecting $v$ to both $x$ and
% $y$.  For
% each of the $O(n)$ vertices $w$ not in $B$, and each of the $O(n^2)$
% pairs of vertices $\{a,b\}$ in $B$,
% test whether it is the case that $a$ and $b$ are not connected to
% each other by an edge, but are each connected by an edge to $w$.
% If so, $w\in\overline B$.  Otherwise,
% $\Gamma\not\in\AS$ and we stop.  The algorithm thus terminates in
% $O(n^5)$ steps and decides whether
% $\Gamma\in\AS$.
% \end{proof}
% \fi
Indeed, it is easily seen that there are polynomial-time algorithms
for deciding whether a given graph is in $\AS$ and/or $\CFS$.  Testing
for $\AS$ by examining each block and determining whether it witnesses
$\AS$ takes $O(n^5)$ steps, where $n$ is the number of vertices.  The
$\CFS$ property is harder to detect, but essentially reduces to
determining the component structure of the square graph.  The square
graph can be produced in polynomial time and, in polynomial time,
one can find its connected components and check the support of these
components in the original graph.

Using our software, we tested
random graphs in $\G(n,p)$ for membership in $\AS$ for
$n\in\{300+100k\mid0\leq k\leq12\}$ and
$\{p(n)=\alpha\left(\log
n/n\right)^{\frac{1}{3}}\mid \alpha=0.80+0.1k,\,0\leq k\leq 9\}$.
For each pair $(n,p)$ of this type, we generated $400$ random graphs
and tested each for membership in $\AS$.  (This number of tests
ensures that, with probability approximately $95\%$, the measured
proportion of $\AS$ graphs is within at most
$0.05$ of the actual proportion.)
% although this estimate is conservative.)
The results are summarized
in Figure~\ref{fig:as_picture}.  The data suggests that, fixing $n$,
the probability that a random graph is in $\AS$ increases
monotonically in the range of $p$ we are considering, rising sharply from almost zero to almost one.
\begin{figure}[h]
\begin{overpic}[width=0.6\textwidth]{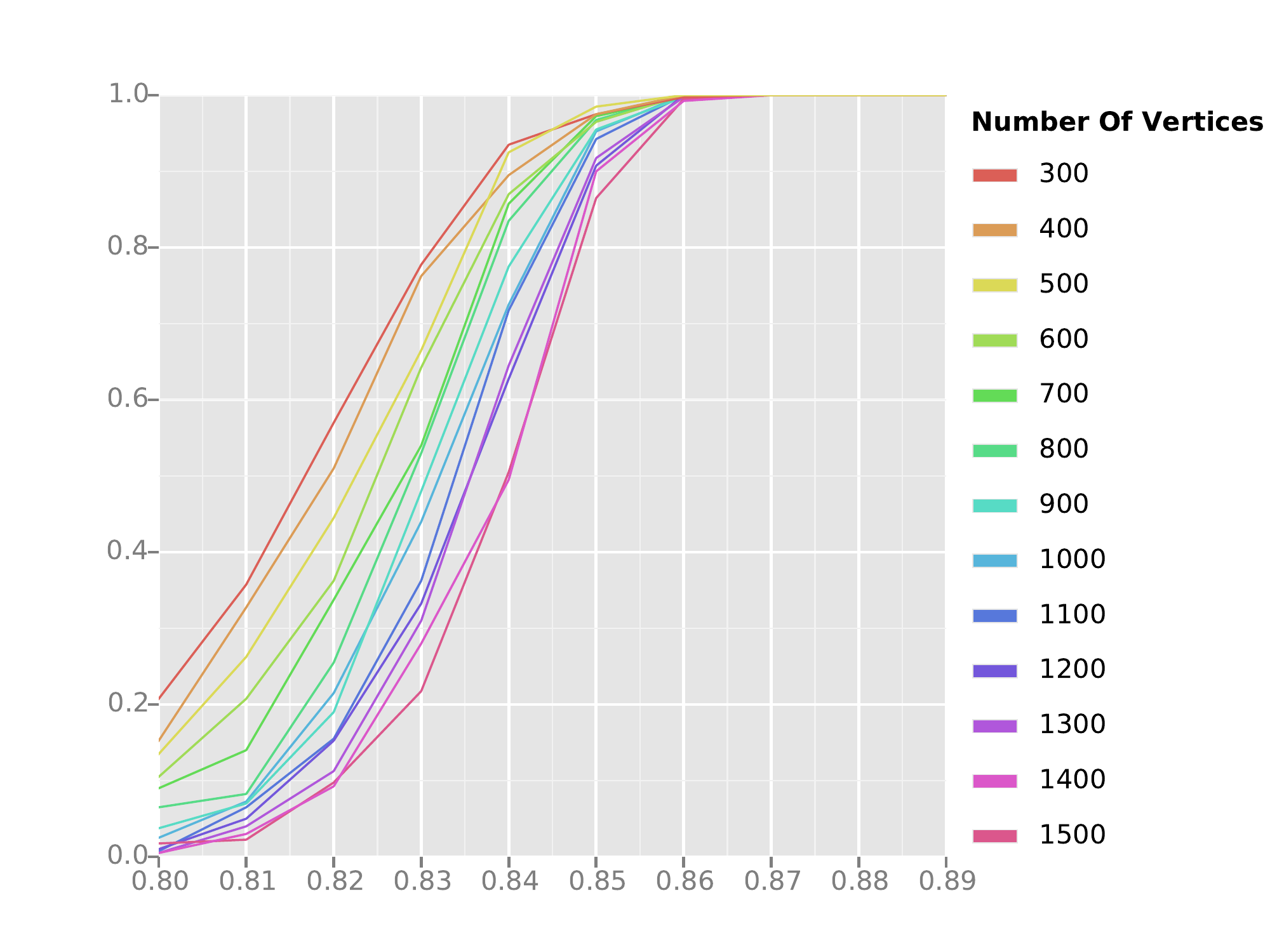}
\put(44,1){$\alpha$}
\put(-7,36){$\Pb(\AS)$}
\end{overpic}
\caption{Experimental prevalence of $\AS$ at density $\alpha\left(\frac{\log n}{n}\right)^{\frac{1}{3}}$.}\label{fig:as_picture}
\end{figure}

In Figure~\ref{fig:cfs_prevalence_PT}, we display the results of testing
random graphs for membership in $\CFS$ for
$n\in\{100+100k\mid 0\leq k\leq 15\}$ and
$\{p(n)=\alpha n^{-\frac{1}{2}} \mid \alpha=0.700+0.025k,\,0\leq
k\leq 8\}$.
For each pair $(n,p)$ of this type, we generated $400$ random graphs
and tested each for membership in $\CFS$.  (This number of tests
ensures that, with probability approximately $95\%$, the measured
proportion of $\AS$ graphs is within at most
$0.05$ of the actual proportion.) The data suggests that, fixing $n$,
the probability that a random graph is in $\CFS$ increases
monotonically in considered range of $p$: rising sharply from almost
zero to almost one inside a narrow window.

\begin{figure}[h]
\begin{overpic}[width=0.6\textwidth]{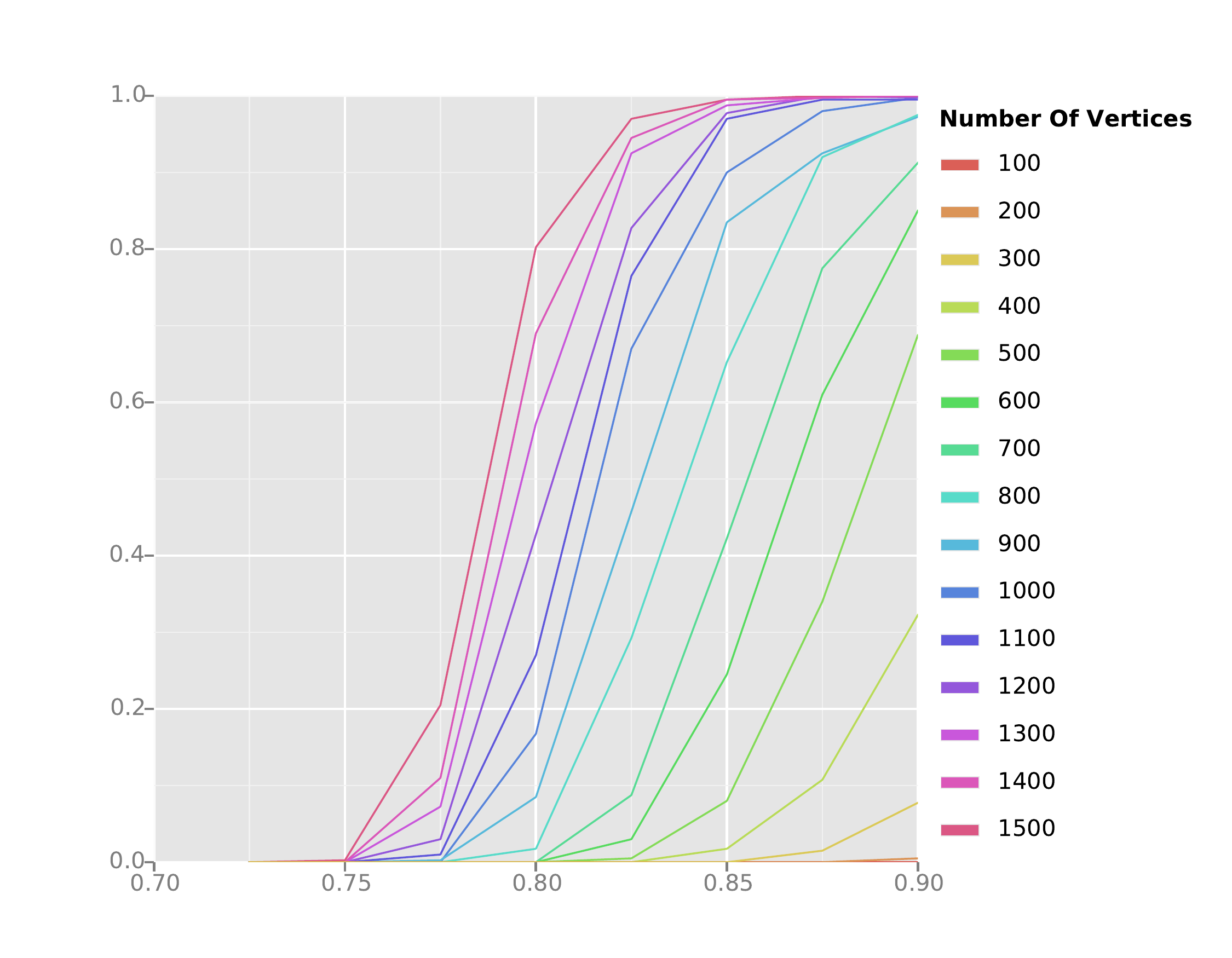}
\put(42,2){$\alpha$}
\put(-9,38){$\Pb(\CFS)$}
\end{overpic}
\caption{Experimental prevalence of $\CFS$
membership at density
$\alpha n^{-\frac{1}{2}}$.}\label{fig:cfs_prevalence_PT}
\end{figure}

\begin{rem}[Block and core sizes]\label{rem:block_core_size}
For each graph $\Gamma$ tested, the $\AS$ software also keeps track of how
many nonadjacent pairs $\{x,y\}$ --- i.e., how many blocks --- were
tested before finding one sufficient to verify membership in $\AS$; if
no such block is found, then all non-adjacent pairs have been tested
and the graph is not in $\AS$.\footnote{A set of such data comes with the source code, and
more is available upon request.}  At densities near the
threshold, this number of blocks is generally very large compared to
the number of blocks tested at densities above the
threshold.  For example, in one instance with
$(n,\alpha)=(1000,0.89)$, verifying that the graph was in $\AS$ was
accomplished after testing just $86$ blocks, while at $(1000,0.80)$, a
typical test checked all $422961$ blocks (expected number: $423397$)
before concluding that the graph is not $\AS$.  At the same
$(n,\alpha)$, another test found that the graph was in $\AS$, but only
after $281332$ tests.  This data is consonant with the spirit of our
proofs of Theorem~\ref{thm:not_augsusp} and
Theorem~\ref{thm:as_upper2}: in the former case, we showed that no
``good'' block exists, while in the latter we show that every block is
good.  We believe that right at the threshold we should have some
intermediate behavior, with the expected number of ``good'' blocks
increasing continuously from $0$ to $(1-p)\binom{n}{2}(1-o(1))$.

What is more, we expect that the more precise threshold for the $\AS$
property, coinciding with the appearance of a single ``good'' block,
should be located ``closer'' to our lower bound than to our upper one,
i.e., at $p(n)= \left((1-\epsilon)\log n/n\right)^{1/3}$, where $\epsilon(n)$
is a sequence of strictly positive real numbers tending to $0$ as
$n\rightarrow \infty$ (most likely decaying at a rate just faster than $(\log n)^{-2}$, see below).  Our experimental data, which exhibit a steep rise
in $\Pr(\AS)$ strictly before the value $\alpha$ hits one, gives some support to this guess.

Finally, our observations on the number of blocks suggests a natural
way to understand the influence of higher-order terms on the emergence of the $\AS$ property: at exactly the threshold for $\AS$,
the event $E(v,w)$ that a pair of non-adjacent vertices $\{v,w\}$
gives rise to a ``good'' block is rare and, despite some mild
dependencies, the number $N$ of pairs $\{v,w\}$ for which $E(v,w)$
occurs is very likely to be distributed approximatively like a Poisson
random variable. The probability $\Pr(N\geq 1)$ would then be a very
good approximation for $\Pr(\AS)$. ``Good'' blocks would thus play a role for the emergence of the $\AS$ property in random graphs analogous to that of isolated vertices for connectivity in random graphs.

When $p=\left((1-\epsilon) \log n/n\right)^{\frac{1}{3}}$, the
expectation of $N$ is roughly $ne^{-n^{\epsilon}(1-\epsilon)\log n}$.
%(1+o(1))}$.
This expectation is $o(1)$ when $0<\epsilon(n)=\Omega(1/n)$ and is
$1/2$ when $\epsilon(n)=(1+o(1)) \log 2/(\log n)^2$.  This suggest
that the emergence of $\AS$ should occur when $\epsilon(n)$ decays
just a little faster than $(\log n)^2$.

%So perhaps the techniques above could be used to show that if
%$p=(1-o(1/\log n))$ then a.a.s.\ $\AS$ holds for $\Gamma \in
%G_{n,p}$.

% that, if one considers
%$\Pb(\AS)$, i.e., the probability that \emph{some} block witnesses
%membership in $\AS$, one sees the ``discontinuous'' phenomenon of a
%sharp threshold, arising from the (apparently) continuously varying \emph{number of
%blocks needed to witness membership in $\AS$}. This is supported by the proof of Theorem~\ref{thm:as_upper2}, which shows that for large $n$ and $\alpha>1$ we expect to arrive at a block that witnesses membership in $\AS$ immediately. %It would be interesting to give an alternate proof of Theorem~\ref{thm:not_augsusp}along these lines.
\end{rem}

\begin{rem}\label{rem: CFS speculation}
    Our data suggests that the prevalence of $\CFS$ is closely
    related to the emergence of a giant component in the square
    graph.  Indeed, below the experimentally observed threshold
    for
    $\CFS$, not only is the support of
    the largest component in the square graph not
    all of $\Gamma \in \G(n,p)$, but
    in fact the size of the support of the largest component is an
    extemely small proportion of
    the vertices (see Figure~\ref{fig:cfs_largecomponent_PT}).

    \begin{figure}[h]
    \begin{overpic}[width=0.6\textwidth]{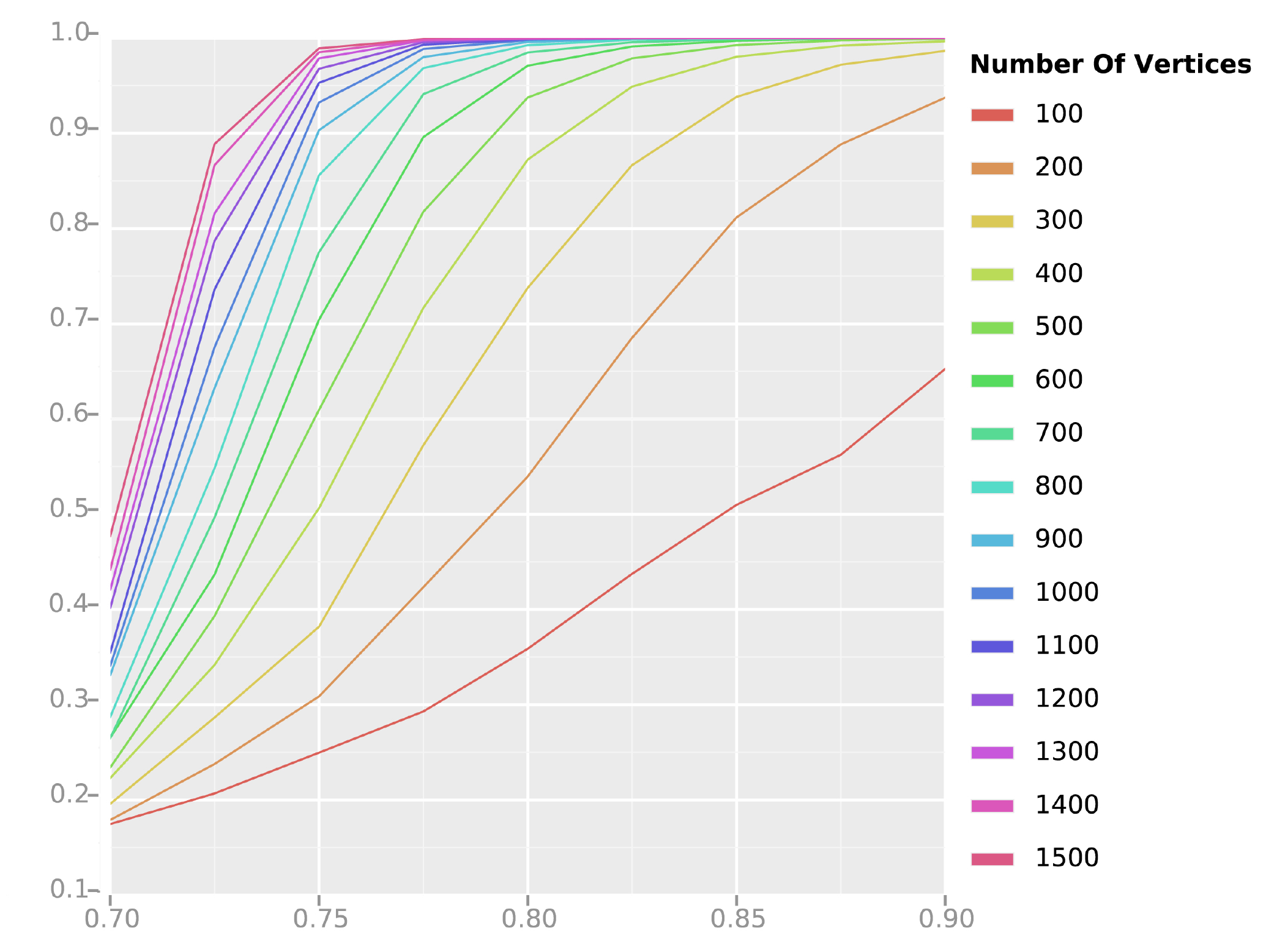}
    \put(40,-3){$\alpha$}
    \put(-25,45){Fraction of}
    \put(-25,40){vertices in}
    \put(-25,35){largest $\CFS$}
    \put(-25,30){component}
    \end{overpic}
    \caption{Fraction of vertices supporting the largest
    $\CFS$--subgraph
    at density
    $\alpha n^{-\frac{1}{2}}$.}\label{fig:cfs_largecomponent_PT}
    \end{figure}

    In the Erd{\H o}s--R\'enyi random graph, a giant component
    emerges when $p$ is around $1/n$, i.e., when vertices begin to
    expect at least one neighbor; this corresponds to a paradigmatic
    condition of expecting at least one child for survival of a
    Galton--Watson process (see~\cite{BollobasRiordan2012} for a
    modern treatment of the topic).  The heuristic observation
    that when $p=cn^{-1/2}$ the vertices of a diagonal $e$ in a fixed $4$-cycle $F$ expect to be adjacent to $cn^2$ vertices outside the $4$-cycle, giving rise to an expected $\binom{cn^2+2}{2}-1$ new $4$-cycles connected to $F$ through $e$ in $\square(\Gamma)$ suggests that $c=\sqrt{\ds\frac{\sqrt{17}-3}{2}}\approx 0.7494$ could be a reasonable guess for the location of the threshold for the $\CFS$ property.
    %a fixed $4$--cycle expects to be
    %adjacent to $2c^2$ distinct vertices which lie
    %in adjacent $4$--cycles in $\square(\Gamma)$
    %suggesting that $c=1/\sqrt{2}$ could be a reasonable guess for the
    %location of the threshold for the $\CFS$ property.  
    Our data,
    although not definitive,
    appears somewhat supportive of this guess: see
    Figure~\ref{fig:cfs_largecomponent_PT} which is based on the
    same underlying data set as Figure~\ref{fig:cfs_prevalence_PT}.

    We note that unlike an Erd{\H o}s--R\'enyi random graph, the
    square graph $\square(\Gamma)$ exhibits some strong local
    dependencies, which may make the determination of the exact
    location of its phase transition a much more delicate affair.
    \end{rem}

\begin{rem}
    For comparison with the threshold data for $\AS$ and $\CFS$, we include below a similar figure of experimental
    data for connectivity for $\alpha$ from $0.8$ to $1.4$, where $p=\frac{\alpha\log n}{n}$. Given what we know about the thresholds for connectivity and the $\AS$ property, this last set of data together with Figure~\ref{fig:as_picture} should serve as a warning not to draw overly strong conclusions: the graphs we tested are sufficiently large for the broader picture to emerge, but probably not large enough to allow us to pinpoint the exact location of the threshold for $\CFS$.
\end{rem}

\begin{figure}[h]
\begin{overpic}[width=0.6\textwidth]{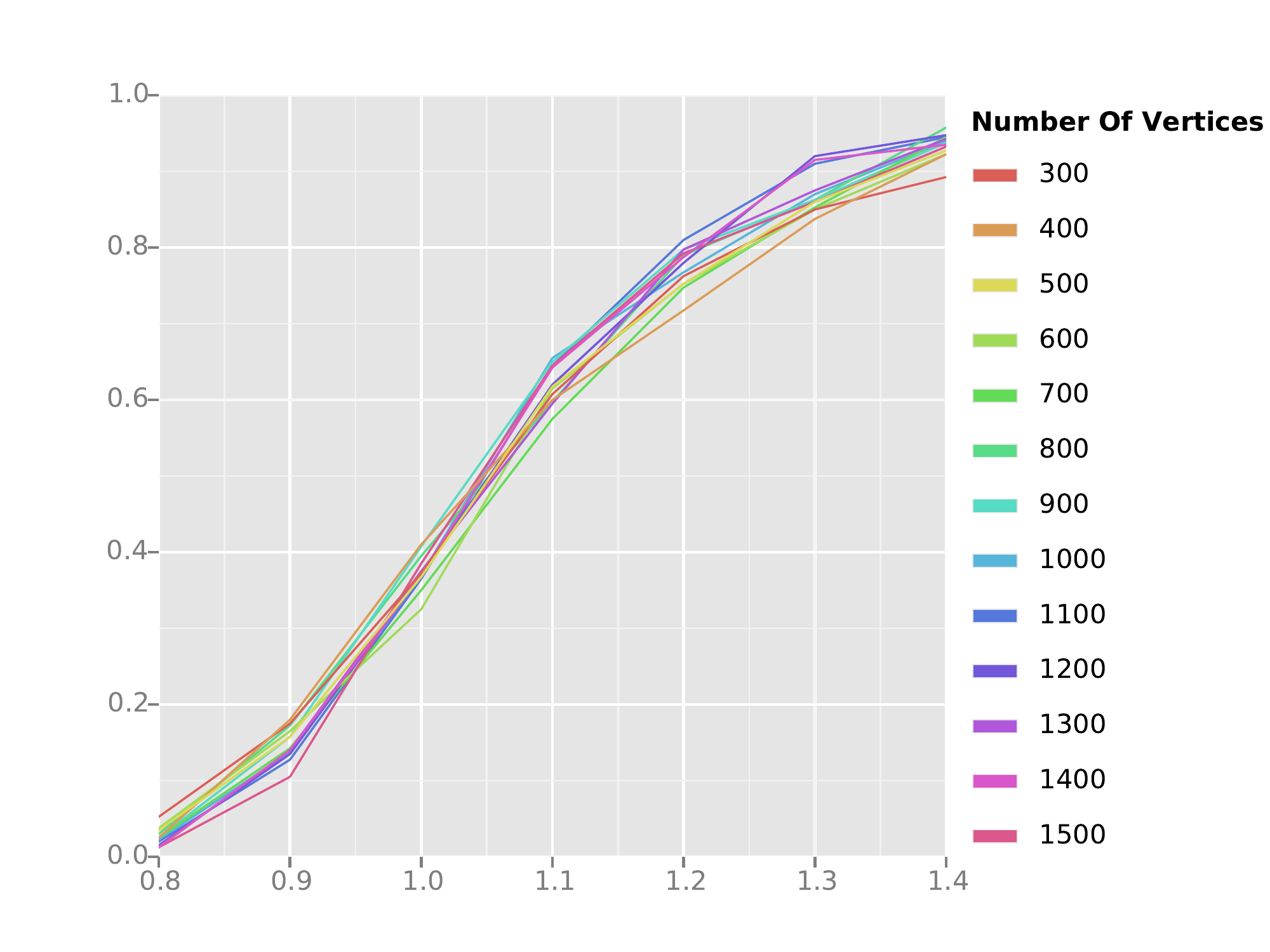}
\put(44,1){$\alpha$}
\put(-15,36){$\Pb$(connected)}
\end{overpic}
\caption{Experimental prevalence of connectedness at density $\alpha\frac{\log
n}{n}$.}\label{fig:conn_picture}
\end{figure}

\bibliographystyle{acm}

\end{document}